\newtheorem{thm}{Theorem}
\newtheorem{prop}[thm]{Proposition}
\newtheorem{defn}[thm]{Definition}
\newtheorem{lem}[thm]{Lemma}
\newtheorem{conj}[thm]{Conjecture}
\newtheorem*{thmworstdestab}{Theorem \ref{thm:worst_destab}}
\newtheorem*{propL2lower}{Theorem \ref{prop:L2lower}}
\newtheorem*{thmcalflow}{Theorem \ref{thm:calflow}}
\theoremstyle{remark}
\newtheorem*{rem}{Remark}
\title{Optimal test-configurations for toric varieties}
\author{G\'abor Sz\'ekelyhidi}
\date{}
\begin{document}

\maketitle

\begin{abstract}
  On a K-unstable toric variety we show the existence of an optimal
  destabilising convex function. We show that if this is piecewise linear then
  it gives rise to a decomposition into semistable pieces analogous to the
  Harder-Narasimhan filtration of an unstable vector bundle. We also show that
  if the Calabi flow exists for all time on a toric variety
  then it minimises the Calabi functional. In this case the infimum 
  of the Calabi functional is given by the supremum of
  the normalised Futaki invariants over all destabilising
  test-configurations, as predicted by a conjecture of Donaldson. 
\end{abstract}

\section{Introduction}
The Harder-Narasimhan filtration of an unstable vector bundle is a canonical
filtration with semistable quotient sheaves. It arises for example when
computing the infimum of the Yang-Mills functional (see
Atiyah-Bott~\cite{AB83}), which is analogous to the Calabi functional on
a K\"ahler manifold.
Bruasse and Teleman~\cite{BT05} have shown that the
Harder-Narasimhan filtration arises in other moduli problems as well, when one
looks at the optimal destabilising one-parameter subgroup for a non-semistable
point. The notion of optimal one-parameter subgroups is well known in
geometric invariant theory, see for example Kirwan~\cite{Kir84}.

In the meantime much progress has been made in studying the stabiliy of
manifolds in relation to the existence of canonical metrics. Such a relationship
was originally conjectured by Yau~\cite{Yau93} in the case of K\"ahler-Einstein
metrics. Tian~\cite{Tian97} and Donaldson~\cite{Don01}, \cite{Don02}  made great
progress on this problem, and by now there is a large relevant literature. 
For us the important work is \cite{Don02} through which we have a good
understanding of stability for toric varieties (for further work on toric
varieties see also~\cite{Don05_1},\cite{Don06}). In particular we can
construct a large family of test-configurations, 
which are analogous to one-parameter
subgroups, in terms of data on the moment polytope. In this paper we
use this to study the optimal 
destabilising test-configuration on an unstable toric variety and the
Harder-Narasimhan type decomposition that it gives rise to.

Recall that a compact polarised toric variety $(X,L)$ corresponds to a polytope
$P\subset\mathbf{R}^n$, which is equipped with a canonical measure $d\sigma$ on
the boundary $\partial P$ (for details see Section~\ref{sec:prelim}). 
We also let $d\mu$ denote
the Lebesgue measure on the interior of $P$, and write $\hat{S}$ for the
quotient $Vol(\partial P,d\sigma)/Vol(P,d\mu)$. This is essentially
the average scalar
curvature of metrics on the toric variety. Let us define the functional 
\[ \mathcal{L}(f) = \int_{\partial P} f\, d\sigma - \hat{S}\int_P
f\,d\mu,\]
which by the choice of $\hat{S}$ vanishes on constant functions. 
Donaldson shows that given a rational piecewise linear convex function $f$ on
$P$, one can define a test-configuration for $(X,L)$ with generalised Futaki invariant
$\mathcal{L}(f)$ (if we scale the Futaki invariant in the right way). We will
say that the toric variety is unstable if for some convex function $f$ we have
$\mathcal{L}(f)<0$. The natural
norm for the test-configuration is given by the $L^2$-norm of $f$ at least if we
consider $f$ with zero mean. This means that the optimal destabilising
test-configuration we are looking for in the unstable case
should minimise the functional 
\[ W(f) = \frac{\mathcal{L}(f)}{\Vert f\Vert_{L^2}},\]
defined for non-zero convex functions. Note that the minimum will be
negative and the minimiser automatically has zero mean. 
The space of functions $\mathcal{C}_1$ on which we
minimise is the set of continuous convex functions on $P^*$, integrable on
$\partial P$, where $P^*$ is union of $P$ and its codimension one faces.
Our first result in Section~\ref{sec:worstdestab} is 
\begin{thmworstdestab} Let the toric variety with moment
  polytope $P$ be unstable. Then there exists a convex minimiser
  $\Phi\in\mathcal{C}_1\cap L^2(P)$ for
  $W$ which is unique up
  to scaling. Let us fix the scaling by requiring 
  that 
  \[ \mathcal{L}(\Phi)=-\Vert\Phi\Vert^2_{L^2}.\]
  Letting $B=\hat{S}-\Phi$, we then have $\mathcal{L}_B(f)\geqslant
  0$ for all convex
  functions $f$, and $\mathcal{L}_B(\Phi)=0$. Conversely these two
  conditions characterise $\Phi$.  
\end{thmworstdestab}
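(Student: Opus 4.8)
The plan is to combine the direct method of the calculus of variations, which produces a minimiser, with a first-variation computation, which reads off the two conditions, and then to settle the converse and uniqueness by elementary convexity. Throughout I write $\langle\cdot,\cdot\rangle$ for the $L^2(P,d\mu)$ inner product and recall that $\mathcal{L}_B(f)=\int_{\partial P}f\,d\sigma-\int_P Bf\,d\mu$, so that for $B=\hat{S}-\Phi$ one has the identity $\mathcal{L}_B(f)=\mathcal{L}(f)+\langle\Phi,f\rangle$. Since $W$ is invariant under $f\mapsto cf$ for $c>0$, while $\mathcal{L}$ annihilates constants and $\|f+c\|_{L^2}$ is smallest at zero mean, I may restrict the search to convex functions of zero mean with $\|f\|_{L^2}=1$, and set $\lambda=\inf W$, which is negative by the instability hypothesis.

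First I would establish existence. Take a minimising sequence $f_i$ of zero mean with $\|f_i\|_{L^2}=1$, so that $\mathcal{L}(f_i)=\int_{\partial P}f_i\,d\sigma\to\lambda$. The crucial input is a compactness estimate for normalised convex functions of the type established by Donaldson: the zero-mean and $L^2$ normalisations force uniform local bounds in the interior of $P$ (and show $\lambda$ is finite), so that after passing to a subsequence $f_i$ converges locally uniformly on the interior to a convex function $\Phi$. The $L^2$ bound yields uniform integrability, whence $\int_P\Phi\,d\mu=\lim\int_P f_i\,d\mu=0$ and $\|\Phi\|_{L^2}\leq 1$ by Fatou. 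The delicate point is the boundary term, which I would control by the lower semicontinuity of $f\mapsto\int_{\partial P}f\,d\sigma$ along interior-locally-uniform limits of convex functions, giving $\mathcal{L}(\Phi)=\int_{\partial P}\Phi\,d\sigma\leq\liminf\int_{\partial P}f_i\,d\sigma=\lambda<0$. In particular $\Phi\neq 0$; and since $\lambda<0$, the bounds $\|\Phi\|_{L^2}\leq 1$ and $\mathcal{L}(\Phi)\leq\lambda$ together with $W(\Phi)\geq\lambda$ force $\|\Phi\|_{L^2}=1$, $\mathcal{L}(\Phi)=\lambda$ and $W(\Phi)=\lambda$. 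Thus $\Phi\in\mathcal{C}_1\cap L^2(P)$ is a genuine minimiser, which I rescale to satisfy $\mathcal{L}(\Phi)=-\|\Phi\|^2_{L^2}$.

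Next I would extract the two conditions by first variation. For any convex $f$ the ray $\Phi+tf$, $t\geq 0$, stays in the convex cone, so $t=0$ minimises $t\mapsto W(\Phi+tf)$ on $[0,\infty)$; as $\Phi\neq 0$ this function is differentiable at $0$, and a direct computation using the normalisation gives $\tfrac{d}{dt}\big|_{0}W(\Phi+tf)=\|\Phi\|_{L^2}^{-1}\mathcal{L}_B(f)$. Minimality therefore yields $\mathcal{L}_B(f)\geq 0$ for every convex $f$, while scale invariance makes $W((1+t)\Phi)\equiv\lambda$ constant, so $\mathcal{L}_B(\Phi)=0$ (equivalently this is immediate from the chosen normalisation). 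For the converse, suppose a convex $\Phi$ satisfies $\mathcal{L}_B(f)\geq 0$ for all convex $f$ and $\mathcal{L}_B(\Phi)=0$. The latter reads $\mathcal{L}(\Phi)=-\|\Phi\|^2_{L^2}$, so $W(\Phi)=-\|\Phi\|_{L^2}$; the former with Cauchy-Schwarz gives $\mathcal{L}(f)\geq-\langle\Phi,f\rangle\geq-\|\Phi\|_{L^2}\|f\|_{L^2}$ for every convex $f$, i.e. $W(f)\geq W(\Phi)$, so $\Phi$ minimises $W$.

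Uniqueness up to scaling then follows from strict convexity of the $L^2$-norm: two normalised minimisers $\Phi_1,\Phi_2$ have $\|\Phi_j\|_{L^2}=-\lambda$ and $\mathcal{L}(\Phi_j)=-\lambda^2$, so their convex average $\tfrac12(\Phi_1+\Phi_2)$ has $\mathcal{L}=-\lambda^2$ and $L^2$-norm at most $-\lambda$, with equality in the triangle inequality precisely when $\Phi_1=\Phi_2$; since its $W$-value cannot beat the infimum, equality must hold. I expect the genuine obstacle to be the existence step, and specifically the behaviour at $\partial P$: one must prove the local compactness of normalised convex functions and the lower semicontinuity of the boundary integral under interior convergence, and must rule out loss of $L^2$-mass or of boundary integrability in the limit, so that $\Phi$ indeed lies in $\mathcal{C}_1\cap L^2(P)$. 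The first variation, the converse, and uniqueness are by comparison routine convex analysis.
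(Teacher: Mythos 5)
Your argument is correct, and while its skeleton (direct method plus a compactness input, first variation, Cauchy--Schwarz for the converse, strict convexity for uniqueness) matches the paper's, the internal route is genuinely different and in one place cleaner. The paper does not minimise $W$ directly: it introduces the extremal affine function $A$ (the unique affine function with $\mathcal{L}_A=0$ on affine functions), minimises the modified functional $\tilde{W}_A(f)=\mathcal{L}_A(f)/\Vert f-\pi(f)\Vert_{L^2}$ over \emph{normalised} convex functions (nonnegative, vanishing at an interior point) with the boundary normalisation $\int_{\partial P}f_k\,d\sigma=1$, which feeds verbatim into Donaldson's compactness result (Proposition 5.2.6 of \cite{Don02}); this requires the norm comparison of Lemma~\ref{lem:normineq} to bound $\tilde{W}_A$ below, and the minimiser is then translated back via $\Phi=\phi+\hat{S}-A$. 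Your zero-mean, unit-$L^2$ normalisation strips away all of this scaffolding, at the price that Donaldson's compactness cannot be quoted verbatim: your $f_i$ are not nonnegative and their boundary integrals converge to the \emph{negative} number $\lambda$, so to invoke the compactness and the lower semicontinuity of $\int_{\partial P}f\,d\sigma$ you must first subtract the supporting affine function $\ell_i$ of $f_i$ at an interior point, bound $\ell_i$ using the local two-sided bounds that the $L^2$ normalisation imposes on convex functions, and note that $\int_{\partial P}(f_i-\ell_i)\,d\sigma$ is then bounded; this is routine, and you correctly flag it as the delicate point, but it is the one step you should write out. Your first variation is the most substantial improvement: perturbing only along convex rays $\Phi+tf$, $t\geqslant 0$, and computing the one-sided derivative $\Vert\Phi\Vert_{L^2}^{-1}\mathcal{L}_B(f)\geqslant 0$ replaces the paper's two-step argument in Proposition~\ref{prop:Wmin} (perturbations $\phi+t\psi$ with $\psi$ non-convex and $\langle\phi,\psi\rangle=0$, followed by the decomposition $f=c\cdot\phi+\psi$ and the convexity of $(f+K\phi)/(c+K)$ for large $K$); your ray argument even recovers $\mathcal{L}_B(\ell)=0$ for affine $\ell$ automatically, since both $\ell$ and $-\ell$ are convex. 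The converse and the uniqueness argument coincide with the paper's. One caveat in the paper's favour: its detour through $A$ is not wasted, since Proposition~\ref{prop:Wmin} in that form is exactly what Definition~\ref{def:semistab} and Section~\ref{sec:HN} reuse for general pairs $(Q,d\sigma)$, whereas your direct route proves the theorem as stated but would need the same mild generalisation there.
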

\noindent Here we define 
\[ \mathcal{L}_B(f) = \int_{\partial P} f\,d\sigma - \int_P Bf\,d\mu.\]
Note that $\Phi$ would only define a test-configuration if it were piecewise
linear. This is not known and perhaps not true in general
so instead we may think of $\Phi$ as a limit
of test-configurations. The proof is based on a compactness theorem for
convex functions in $\mathcal{C}_1$ due to Donaldson. 

We also give an alternative description of the optimal destabiliser:
\begin{propL2lower}
  Consider the set $E\subset L^2(P)$ defined by
  \[ E = \{ h\in L^2\,|\, \mathcal{L}_h(f)\geqslant 0 \text{ for all
  convex } f\}.\]
  If $\Phi$ is the optimal destabilising convex function we found above,
  then $B = \hat{S}-\Phi$ is the unique minimiser of the $L^2$ norm for
  functions in $E$.  
\end{propL2lower}

The above two results show that 
\begin{equation}\label{eq:calinf}
  \inf_{h\in E} \Vert h-\hat{S}\Vert_{L^2} = \sup_{f\text{ convex}}
\frac{-\mathcal{L}(f)}{\Vert f\Vert_{L^2}}.
\end{equation}
In view of a conjecture of Donaldson's in~\cite{Don02} (see
Conjecture~\ref{conj:Don} in the next section), 
one can think of $E$ as the closure in $L^2$ of the possible scalar curvature
functions of torus invariant metrics on the toric variety. Thus
Equation (\ref{eq:calinf}) 
should be compared to another conjecture of Donaldson's (see~\cite{Don05})
saying that the
infimum of the Calabi functional is given by the supremum of the normalised
Futaki invariants over all test-configurations. Recall that the Calabi
functional is defined to be the $L^2$-norm of $S(\omega)-\hat{S}$ where
$S(\omega)$  is the scalar curvature of a K\"ahler metric $\omega$ and $\hat{S}$
is its average. In our toric setting this conjecture is
\begin{conj}\label{conj:calinf}
  For a polarised toric variety $(X,L)$ we have
  \[ \inf_{\omega\in c_1(L)} \Vert S(\omega)-\hat{S}\Vert_{L^2} =
  \sup_{f\text{ convex}} \frac{-\mathcal{L}(f)}{\Vert f\Vert_{L^2}},
  \]
  where $f$ runs over convex functions on the moment polytope $P$.
\end{conj}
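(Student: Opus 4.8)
The plan is to prove the asserted identity by separating it into two inequalities, and to reduce everything to the set $E$ and the optimal function $B=\hat S-\Phi$ already produced in the previous results. By Theorem~\ref{prop:L2lower} and Equation~(\ref{eq:calinf}) the right-hand side equals $\inf_{h\in E}\Vert h-\hat S\Vert_{L^2}=\Vert B-\hat S\Vert_{L^2}=\Vert\Phi\Vert_{L^2}$, so it is enough to compare the Calabi functional with this quantity. The inequality $\geqslant$ will be a formal consequence of the structure already in place, while $\leqslant$ is the genuinely analytic statement requiring a minimizing family of metrics.

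For the lower bound I would use Abreu's description of the scalar curvature of a torus-invariant metric: if $u$ is the symplectic potential on $P$ and $(u^{ij})$ denotes the inverse of its Hessian, then $S(\omega)=-\sum_{i,j}u^{ij}_{,ij}$. Integrating twice by parts against a convex function $f$, the Guillemin boundary behaviour of $u$ makes the boundary contribution reproduce $\int_{\partial P}f\,d\sigma$, leaving the clean identity
\[ \mathcal{L}_{S(\omega)}(f)=\int_{\partial P}f\,d\sigma-\int_P S(\omega)\,f\,d\mu=\int_P \sum_{i,j}u^{ij}f_{,ij}\,d\mu. \]
Since $(u^{ij})$ is positive definite and the Hessian of a convex $f$ is positive semidefinite, the integrand is nonnegative, so $S(\omega)\in E$ for every $\omega\in c_1(L)$. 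Hence $\Vert S(\omega)-\hat S\Vert_{L^2}\geqslant\inf_{h\in E}\Vert h-\hat S\Vert_{L^2}$, and taking the infimum over $\omega$ gives the $\geqslant$ half of the claim. This direction is robust and needs no convergence input.

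For the reverse inequality the aim is to produce metrics whose scalar curvature approximates $B$ in $L^2$. The natural vehicle is the Calabi flow, the gradient flow of the Calabi functional, which decreases $\Vert S(\omega)-\hat S\Vert_{L^2}$ monotonically and which in the toric picture is a fourth-order parabolic equation for the symplectic potential. The plan is to show that this flow is \emph{minimizing}: its Calabi energy decreases all the way down to $\Vert\Phi\Vert_{L^2}$, and by the lower bound it cannot drop below this value, so the two must coincide. Concretely I would track $\mathcal{L}_{S(\omega_t)}$ and the Calabi functional along the flow, and use the convexity of $\mathcal{L}$ together with the characterization of $B$ as the $L^2$-nearest point of $E$ to argue that any weak limit of the scalar curvatures is forced to equal $B$.

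The main obstacle lives exactly here: proving long-time existence and adequate compactness for the Calabi flow on a toric variety — that is, the a priori estimates for the degenerate fourth-order Abreu-type equation — and then extracting from a possibly non-compact, escaping family of metrics a limiting object that realizes the infimum. This is precisely why the statement remains a conjecture: unconditionally one knows neither that the flow exists for all time nor that the infimum is attained, and indeed $\Phi$ need not be piecewise linear. Under the hypothesis that the Calabi flow exists for all time, however — the setting of Theorem~\ref{thm:calflow} — the monotonicity of the Calabi functional together with the uniqueness of the optimal destabiliser from Theorem~\ref{thm:worst_destab} should suffice to force the flow to be minimizing, and thereby to establish the equality in that case.
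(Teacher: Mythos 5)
Your overall architecture agrees with the paper's: the statement is a conjecture there, established only conditionally, and the paper's route is exactly the one you gesture at — Theorem~\ref{thm:calflow} shows that long-time existence of the Calabi flow forces $\inf_{u\in\mathcal{S}}\Vert S(u)-\hat{S}\Vert_{L^2}=\Vert\Phi\Vert_{L^2}$, and a closing remark invokes Donaldson's theorem in~\cite{Don05} to pass from torus-invariant metrics to all of $c_1(L)$. Your identification of the right-hand side with $\Vert\Phi\Vert_{L^2}=\inf_{h\in E}\Vert h-\hat{S}\Vert_{L^2}$ via Theorem~\ref{prop:L2lower} is correct, and your parallelogram/nearest-point idea in $E$ is genuinely the paper's device for upgrading subsequential convergence. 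But there is a concrete error in your lower bound: the identity $\mathcal{L}_{S(\omega)}(f)=\int_P u^{ij}f_{ij}\,d\mu$ presupposes a symplectic potential $u\in\mathcal{S}$, i.e.\ a \emph{torus-invariant} metric. For a general $\omega\in c_1(L)$ the scalar curvature is not a function on $P$ at all, so the conclusion ``$S(\omega)\in E$ for every $\omega\in c_1(L)$'' does not make sense as stated, and no averaging trick rescues it, since the Calabi energy is not obviously decreased by averaging K\"ahler potentials over $T^n$. What your argument proves is $\inf_{u\in\mathcal{S}}\Vert S(u)-\hat{S}\Vert_{L^2}\geqslant\Vert\Phi\Vert_{L^2}$; the step to arbitrary metrics is precisely where the paper needs the non-toric input of~\cite{Don05} (the lower bound of the Calabi functional by normalised Futaki invariants of test-configurations, applied to rational piecewise linear approximations of $\Phi$), and it cannot be recovered from the toric integration by parts.

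The second soft spot is your conditional upper bound. Monotonicity of the Calabi functional plus uniqueness of the optimal destabiliser does \emph{not} by itself force the flow to be minimising: a priori $\Vert S(u_t)-\hat{S}\Vert_{L^2}$ could decrease to a limit strictly above $\Vert\Phi\Vert_{L^2}$. The actual mechanism in the proof of Theorem~\ref{thm:calflow} is the auxiliary functional $\mathcal{F}_B$ with $B=\hat{S}-\Phi$: on one hand $\frac{d}{dt}\mathcal{F}_B(u_t)\leqslant-\Vert B-S(u_t)\Vert_{L^2}^2$, using $\mathcal{L}_B(f)\geqslant0$, $\mathcal{L}_B(B)=0$ and the concavity of $B$; on the other hand Lemma~\ref{lem:LAdecay} together with the $\log\det$ comparison lemma gives $\mathcal{F}_B(u_t)\geqslant-C_1\log(1+t)-C_2$. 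Comparing the linear decay one would get against this logarithmic lower bound produces a subsequence with $\Vert S(u_k)-B\Vert_{L^2}\to0$, after which your $E$-convexity argument finishes. If your intention is simply to quote Theorem~\ref{thm:calflow} as a black box the conditional statement does follow, but as a proof sketch the phrase ``should suffice to force the flow to be minimizing'' hides exactly the content of that theorem, and the ingredient you name (monotonicity plus uniqueness) is not the one that does the work.
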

\noindent Instead of trying to show that Conjecture~\ref{conj:Don}
implies this conjecture, we will show in Section~\ref{sec:Calabi} that
it holds if the Calabi flow exists for all time. 

In Section~\ref{sec:HN} 
we show that if the optimal convex function $\Phi$ that we found
above is piecewise linear, then we obtain a canonical decomposition of the
polytope into semistable pieces, ie. an analogue of the Harder-Narasimhan
filtration. The pieces are given by the maximal subpolytopes on which $\Phi$ is
linear. For the precise statement see Theorem~\ref{thm:piecewiseHN}. 
When $\Phi$ is not piecewise linear then in the same way it defines a
decomposition into infinitely many pieces. We discuss the conjectured
relationship between these decompositions and the Calabi flow. 

In the final Section~\ref{sec:Calabi}
we study the Calabi flow on a toric variety. This is a fourth order parabolic
flow in a fixed K\"ahler class defined by 
\[ \frac{\partial \phi_t}{\partial t} = S(\omega_t), \]
where $\omega_t=\omega+i\partial\overline{\partial}\phi_t$ is a path of K\"ahler
metrics and $S(\omega_t)$ is the scalar curvature. 
It was introduced by Calabi in~\cite{Cal82} in order to find extremal
K\"ahler metrics. It is known that the
flow exists for a short time (see Chen-He~\cite{CH06}), but the
long time existence has only been shown in special cases. For the case of
Riemann surfaces see Chru\'sciel~\cite{Chru91} (and also~\cite{Chen01}
and~\cite{Stru02}). For ruled manifolds, restricting to metrics of
cohomogeneity one see~\cite{Gua05}. For general K\"ahler manifolds
long time existence has
been shown in~\cite{CH06}, assuming that the Ricci curvature
remains bounded. 

Under the assumption that it exists for all time, 
we show that the Calabi flow minimises the Calabi
functional. More precisely we show 
\begin{thmcalflow}
  Suppose that $u_t$ is a solution of the Calabi flow for all
  $t\in[0,\infty)$. Then
  \[ \lim_{t\to\infty} \Vert S(u_t)-\hat{S}+\Phi \Vert_{L^2} 
  =0,\]
  where $\Phi$ is the optimal destabilising convex function from
  Theorem~\ref{thm:worst_destab}. Moreover 
  \[ \Vert\Phi\Vert_{L^2} = \inf_{u\in\mathcal{S}}\Vert S(u) -
  \hat{S}\Vert_{L^2}. \]
\end{thmcalflow}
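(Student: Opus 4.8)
The plan is to reduce the entire statement to one assertion about the Calabi energy $\mathrm{Cal}(u_t):=\Vert S(u_t)-\hat S\Vert_{L^2}^2$, namely that it decreases along the flow to its infimum $\Vert\Phi\Vert_{L^2}^2$. The starting point is Donaldson's integration by parts formula, which gives, for any symplectic potential $u$ and any convex function $f$,
\[ \mathcal{L}(f) = \int_P (S(u)-\hat S)f\,d\mu + \int_P u^{ij}f_{ij}\,d\mu \geq \int_P (S(u)-\hat S)f\,d\mu, \]
since $(u^{ij})$ is positive definite and $f$ is convex. Taking $f=\Phi$ and using the normalisation $\mathcal{L}(\Phi)=-\Vert\Phi\Vert_{L^2}^2$ from Theorem~\ref{thm:worst_destab} yields $\int_P (\hat S - S(u))\Phi\,d\mu \geq \Vert\Phi\Vert_{L^2}^2$, and Cauchy--Schwarz then gives $\Vert S(u)-\hat S\Vert_{L^2}\geq\Vert\Phi\Vert_{L^2}$ for every $u$. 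This already proves $\inf_{u\in\mathcal S}\Vert S(u)-\hat S\Vert_{L^2}\geq\Vert\Phi\Vert_{L^2}$. Equivalently, since taking $f$ convex in the formula shows that every $S(u)$ lies in the set $E$ and since by Theorem~\ref{prop:L2lower} together with~(\ref{eq:calinf}) the function $B=\hat S-\Phi$ is the point of $E$ nearest to $\hat S$, this lower bound is just the statement that $\hat S$ is at distance $\Vert\Phi\Vert_{L^2}$ from the convex set $E$.

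The same inequality gives the decisive estimate
\[ \Vert S(u)-\hat S+\Phi\Vert_{L^2}^2 = \Vert S(u)-\hat S\Vert_{L^2}^2 -2\int_P(\hat S - S(u))\Phi\,d\mu +\Vert\Phi\Vert_{L^2}^2 \leq \mathrm{Cal}(u)-\Vert\Phi\Vert_{L^2}^2, \]
which is precisely the Hilbert-space fact that a point of the convex set $E$ whose distance to $\hat S$ is nearly minimal must be close to the projection $B=\hat S-\Phi$. Consequently, once I know $\mathrm{Cal}(u_t)\to\Vert\Phi\Vert_{L^2}^2$, the first assertion follows at once, and then $\Vert S(u_t)-\hat S\Vert_{L^2}\to\Vert\Phi\Vert_{L^2}$ combines with the lower bound to give the second assertion. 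Thus the whole theorem reduces to proving $\lim_{t\to\infty}\mathrm{Cal}(u_t)=\Vert\Phi\Vert_{L^2}^2$.

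For this I would use the monotonicity of the Calabi flow. Writing the flow in symplectic coordinates and differentiating, the Calabi energy is non-increasing, with dissipation equal to the squared $L^2$-norm of the Hessian of $S(u_t)$ measured in the metric $(u^{ij})$; in particular $\mathrm{Cal}(u_t)$ decreases to some limit $C_\infty\geq\Vert\Phi\Vert_{L^2}^2$, and since the flow exists for all time the total dissipation $\int_0^\infty\bigl(-\tfrac{d}{dt}\mathrm{Cal}(u_t)\bigr)\,dt=\mathrm{Cal}(u_0)-C_\infty$ is finite. Hence there is a sequence $t_k\to\infty$ along which the dissipation tends to zero, i.e. the $(u^{ij})$-Hessian of $S(u_{t_k})$ tends to $0$ in $L^2$. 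It remains only to prove $C_\infty=\Vert\Phi\Vert_{L^2}^2$.

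The main obstacle is exactly this last identification, and here the degeneration of the metric at $\partial P$ is essential: because $(u^{ij})$ degenerates in the normal direction along the boundary, $S(u_{t_k})$ can have vanishing weighted Hessian in the limit while converging to the genuinely convex, non-affine function $\hat S-\Phi$, the Hessian mass escaping to the boundary and being recorded by the measure $d\sigma$. To make this precise I would normalise the functions $S(u_{t_k})-\hat S$ and apply Donaldson's compactness theorem for $\mathcal C_1$ to extract a limit, show using the fundamental identity that the slack $\int_P u_{t_k}^{ij}\Phi_{ij}\,d\mu$ against the optimal $\Phi$ tends to zero, and hence that $\int_P S(u_{t_k})\Phi\,d\mu\to\int_P B\,\Phi\,d\mu$; by the characterisation of $\Phi$ as the unique destabiliser satisfying $\mathcal{L}_B(f)\geq0$ and $\mathcal{L}_B(\Phi)=0$ in Theorem~\ref{thm:worst_destab}, this would force $\mathrm{Cal}(u_{t_k})\to\Vert\Phi\Vert_{L^2}^2$, whence monotonicity gives $C_\infty=\Vert\Phi\Vert_{L^2}^2$. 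The hard part is controlling this limit uniformly across the degenerating metrics $u_{t_k}$ — equivalently, showing that small dissipation drives the Calabi energy to its infimum rather than to a larger critical value — and this, together with the use of the uniqueness in Theorem~\ref{thm:worst_destab}, is where the bulk of the analytic work lies.
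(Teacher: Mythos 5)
Your reductions are sound, and they are essentially the same Hilbert-space geometry the paper uses. The inequality $\mathcal{L}(f)\geqslant\int_P(S(u)-\hat{S})f\,d\mu$ from Lemma~\ref{lem:intpart}, the resulting lower bound $\Vert S(u)-\hat{S}\Vert_{L^2}\geqslant\Vert\Phi\Vert_{L^2}$, and your estimate $\Vert S(u)-\hat{S}+\Phi\Vert_{L^2}^2\leqslant\Vert S(u)-\hat{S}\Vert_{L^2}^2-\Vert\Phi\Vert_{L^2}^2$ are all correct; the paper packages the same facts as ``$S(u)\in E$, $B=\hat{S}-\Phi$ is the minimal-norm point of the convex set $E$'' (Proposition~\ref{prop:L2lower}) plus the parallelogram law, which gives the slightly weaker $\Vert S(u_t)-B\Vert^2\leqslant 2\big(\Vert S(u_t)\Vert^2-\Vert B\Vert^2\big)$. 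So you have correctly reduced the theorem to showing that the Calabi energy tends to $\Vert\Phi\Vert_{L^2}^2$, which is also how the paper is organised.

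The gap is that your plan for this last limit --- the entire analytic content of the theorem --- does not work. Two concrete problems. First, Donaldson's compactness theorem (Proposition 5.2.6 of \cite{Don02}) applies to sequences of normalised \emph{convex} functions in $\mathcal{C}_1$; the functions $S(u_{t_k})-\hat{S}$ are not convex (nor concave) along the sequence, so you cannot extract a limit this way. Second, the vanishing of the slack $\mathcal{L}_{S(u_{t_k})}(\Phi)=\int_P u_{t_k}^{ij}\Phi_{ij}\,d\mu$ is not a lemma on the way to the conclusion but is equivalent to it: combining $\mathcal{L}(\Phi)=\mathcal{L}_{S(u)}(\Phi)+\int_P(S(u)-\hat{S})\Phi\,d\mu$ with $\mathcal{L}(\Phi)=-\Vert\Phi\Vert_{L^2}^2$ gives the identity
\[ \Vert S(u)-\hat{S}\Vert_{L^2}^2-\Vert\Phi\Vert_{L^2}^2 \;=\; \Vert S(u)-B\Vert_{L^2}^2+2\,\mathcal{L}_{S(u)}(\Phi), \]
with both terms on the right nonnegative, so proving the slack tends to zero is exactly (half of) what must be proved. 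Small Calabi dissipation --- vanishing of the $(u^{ij})$-weighted Hessian of $S(u_{t_k})$ --- gives no control over this slack, precisely because the weights degenerate; a priori the energy could level off at a larger ``critical value at infinity,'' as you yourself note. The paper's mechanism is different and bypasses the dissipation of the Calabi energy entirely: it works with the functional $\mathcal{F}_B(u)=-\int_P\log\det(u_{ij})\,d\mu+\mathcal{L}_B(u)$, computes $\frac{d}{dt}\mathcal{F}_B(u_t)\leqslant-\Vert S(u_t)-B\Vert_{L^2}^2$ (using concavity of $B$ and $\mathcal{L}_B(B)=0$), and bounds $\mathcal{F}_B(u_t)\geqslant-C_1\log(1+t)-C_2$ from below. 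This lower bound is the key technical input and comes from Lemma~\ref{lem:LAdecay}, the almost-linear growth estimate $\mathcal{L}_{S(v)}(u_t)\leqslant C(1+t)$ (proved via $\mathcal{F}_{S(v)}$, whose time derivative is bounded because the Calabi energy decreases, together with $\log\det M\leqslant\frac{1}{2}\mathrm{Tr}\,M$), combined with the estimate $-\int_P\log\det(u_{ij})\,d\mu\geqslant-C_1\log\mathcal{L}_{S(v)}(u)-C_2$. The squeeze between a functional that would decay linearly if $\Vert S(u_t)-B\Vert$ stayed bounded away from zero and a lower bound that decays only logarithmically forces $\liminf_{t\to\infty}\Vert S(u_t)-B\Vert_{L^2}=0$; monotonicity of the Calabi energy plus your own convexity estimate then upgrades this to full convergence. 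Without a substitute for this $\mathcal{F}_B$ squeeze, your proof has a genuine hole at its central step.
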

\noindent Here the $u_t$ are symplectic potentials on the polytope defining torus
invariant metrics on the toric variety. It follows from this result
that existence of the
Calabi flow for all time
implies Conjecture~\ref{conj:calinf}.
The proof of the result relies on studying the behaviour of some
functionals introduced in~\cite{Don02} generalising the well known 
Mabuchi functional, and is similar to a previous
result by the author on ruled surfaces (see~\cite{GSz07_1}).

\subsection*{Acknowledgements}
I would like to thank Simon Donaldson, Dimitri Panov, Jacopo Stoppa and
Valentino Tosatti for helpful conversations.

\section{Preliminaries}\label{sec:prelim}

In this section we present some of the definitions and results following
Donaldson~\cite{Don02} that we will need in the paper.
We first describe how to write metrics on
a toric variety in terms of symplectic potentials (see
Guillemin~\cite{Gui94}).
Let $(X,L)$ be a polarised toric variety of dimension $n$. 
There is a dense free open orbit of $(\mathbf{C}^*)^n$ inside $X$ which
we denote by $X_0$. Let us choose complex coordinates $w_1,\ldots,
w_n\in\mathbf{C}^*$. On the covering space $\mathbf{C}^n$ we have
coordinates $z_i=\log w_i=\xi_i + \sqrt{-1}\eta_i$. A
$T^n=(S^1)^n$-invariant metric on $\mathbf{C}^n$ can be written as
$\omega = 2i\overline{\partial}{\partial}\phi$ where $\phi$ is a
function of $\xi_1,\ldots,\xi_n$. This means that
\[ \omega = \frac{\sqrt{-1}}{2}\sum_{i,j} \frac{\partial^2\phi}{
\partial\xi_i \partial\xi_j} dz_i\wedge d\overline{z}_j,\]
so we need $\phi$ to be strictly convex. 

The $T^n$ action on $\mathbf{C}^n$ is Hamiltonian with respect to
$\omega$ and has moment map
\[ m(z_1,\ldots,z_n) = \left( \frac{\partial\phi}{\partial\xi_i}\right).
\]
If $\omega$ compactifies to give a metric representing the first Chern class
$c_1(L)$ then the image of $m$ is an integral polytope
$P\subset\mathbf{R}^n$. The \emph{symplectic potential} of the metric is
defined to be the Legendre transform of $\phi$: for $\underline{x}\in P$
there is a unique point $\underline{\xi}=\underline{\xi}(\underline{x})
\in\mathbf{R}^n$ where $\frac{\partial\phi}{\partial\xi_i}=x_i$, and the
Legendre transform $u$ of $\phi$ is 
\begin{equation}\label{eq:sympot}
	u(\underline{x}) = \sum_i x_i\xi_i - \phi(\underline{\xi}).
\end{equation}
This is a strictly convex function and the metric in the coordinates $x_i,
\eta_i$ is given by
\begin{equation}\label{eq:metricuij}
  u_{ij} dx^i dx^j + u^{ij} d\eta^i d\eta^j,
\end{equation}
where $u^{ij}$ is the inverse of the Hessian matrix $u_{ij}$. 

It is important to study the behaviour of $u$ near the boundary of $P$.
Suppose that $P$ is defined by linear
inequalities $h_k(x) > c_k$, where each $h_k$ induces a primitive integral
function $\mathbf{Z}^n\to\mathbf{Z}$. 
Write $\delta_k(x)=h_k(x) - c_k$ and define the
function
\[ u_0(x) = \sum_k \delta_k(x)\log \delta_k(x), \]
which is a continuous function on $\overline{P}$, smooth in the
interior. It turns out that the boundary behaviour of $u_0$ models the
required boundary behaviour for a symplectic potential $u$ to give
a metric on $X$ in the class $c_1(L)$. More precisely let
$\mathcal{S}$ be the set of continuous, convex functions $u$ on
$\overline{P}$ such that $u-u_0$ is smooth on $\overline{P}$. Then 
(see Guillemin~\cite{Gui94}) 
there is a one-to-one correspondence
between $T$-invariant K\"ahler potentials $\psi$ on $X$, and symplectic
potentials $u$ in $\mathcal{S}$. 

The scalar curvature of the metric defined by $u\in\mathcal{S}$ was
computed by Abreu~\cite{Abr98}, and up to a factor of two is 
given by
\[ S(u) = -\frac{\partial^2 u^{ij}}{\partial x^i\partial x^j}, \]
where $u^{ij}$ is the inverse of the Hessian of $u$, and we sum over the
indices $i,j$.

Define the measure $d\mu$ on $P$ to be the $n$-dimensional Lebesgue
measure. 
Let us also define a measure $d\sigma$ on the boundary $\partial P$ as
follows. On the face of $P$ defined by $h_k(x) = c_k$, we
choose $d\sigma$ so that $d\sigma\wedge dh_k = \pm d\mu$. For example if
the face is parallel to a coordinate hyperplane, then the measure
$d\sigma$ on it is the
standard $n-1$-dimensional Lebesgue measure. Let us write $P^*$ for the
union of $P$ and its codimension one faces and write $\mathcal{C}_1$ for
the set of continuous convex functions on $P^*$ which are integrable on
$\partial P$. 
For a function $A\in L^2(P)$ let us define the functional
\[ \mathcal{L}_A(f) = \int_{\partial P} f\,d\sigma - \int_P Af\, d\mu,\]
defined for convex functions $f\in\mathcal{C}_1\cap L^2$. 
Let us recall the following integration by parts result from~\cite{Don02}
or~\cite{Don05_1}. 

\begin{lem}\label{lem:intpart}
	Let $u\in\mathcal{S}$ and $f$ a continuous convex function on
	$\overline{P}$, smooth in the interior. Then
	$u^{ij}f_{ij}$ is integrable on $P$ and
	\[ \int_P u^{ij} f_{ij}\, d\mu = \int_P (u^{ij})_{ij} f\, d\mu +
	\int_{\partial P} f\, d\sigma. \]
\end{lem}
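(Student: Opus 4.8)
The plan is to prove the identity by truncating $P$ near its boundary, applying the ordinary smooth divergence theorem on the truncation, and letting the truncation exhaust $P$. For small $\epsilon>0$ set $P_\epsilon=\{x\in P:\delta_k(x)\geqslant\epsilon\text{ for all }k\}$. On $P_\epsilon$ both $f$ and $u^{ij}$ are smooth, since $f$ is smooth in the interior and $u$ is strictly convex and smooth there, so its Hessian is invertible with smooth inverse; moreover $P_\epsilon$ is a compact domain with piecewise smooth boundary. Integrating by parts twice on $P_\epsilon$ gives
\begin{equation*}
  \int_{P_\epsilon} u^{ij} f_{ij}\, d\mu = \int_{P_\epsilon} (u^{ij})_{ij} f\, d\mu + \int_{\partial P_\epsilon} \left( u^{ij} f_i\, \nu_j - (u^{ij})_j f\, \nu_i \right) dS,
\end{equation*}
where $\nu$ is the outward unit normal and $dS$ the induced surface measure. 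The whole task is to control the three terms as $\epsilon\to 0$.

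For the interior integral on the left, note that $u^{ij}$ is positive definite and $f_{ij}$ positive semidefinite, so $u^{ij}f_{ij}\geqslant 0$; monotone convergence then shows simultaneously that $u^{ij}f_{ij}$ is integrable on $P$ and that $\int_{P_\epsilon}u^{ij}f_{ij}\,d\mu$ converges to $\int_P u^{ij}f_{ij}\,d\mu$, which is the integrability assertion of the lemma. The first interior integral on the right converges to $\int_P(u^{ij})_{ij}f\,d\mu$ once one knows that $(u^{ij})_{ij}$, which is up to sign the scalar curvature $S(u)$, is integrable against the bounded function $f$; this follows from the same boundary analysis described next.

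The heart of the argument is the boundary behaviour of the inverse Hessian, governed by the condition $u-u_0\in C^\infty(\overline{P})$. The only part of $\partial P_\epsilon$ that matters is the shifted face $\{\delta_k=\epsilon\}$, on which the outward normal is proportional to $-dh_k$. Working in integral coordinates adapted to the face $F_k=\{h_k=c_k\}$ so that $h_k=x_1$ and $F_k=\{x_1=0\}$, the model $u_0$ contributes $x_1\log x_1$, whose Hessian is $1/x_1$ in the $x_1x_1$ slot and $O(1)$ elsewhere. A block inversion then yields $u^{11}=\delta_k+O(\delta_k^2)$, $u^{1j}=O(\delta_k)$ for $j\geqslant 2$, and $u^{jk}=O(1)$ for $j,k\geqslant 2$, together with $\sum_j\partial_j u^{1j}\to 1$ as $\delta_k\to 0$; invariantly, the conormal component $u^{ij}(h_k)_j$ vanishes to first order at $F_k$ and the normal derivative of this component is normalised to $1$. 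Granting these asymptotics I take $\epsilon\to 0$. In the first boundary integral the factor $u^{ij}\nu_j$ is the conormal component, of size $O(\epsilon)$, while $f_i$ is controlled by convexity through the elementary fact that a bounded convex function $g$ on $[0,1]$ satisfies $t\,g'(t)\to 0$ as $t\to 0^+$ (for the normal derivative), together with local boundedness of the tangential derivatives on the relative interior of $F_k$; hence this integral tends to $0$. In the second boundary integral the factor $(u^{ij})_j\nu_i\to 1$, the surface measure $dS$ on $\{\delta_k=\epsilon\}$ converges to $d\sigma$ on $F_k$ by the defining relation $d\sigma\wedge dh_k=\pm d\mu$, and $f$ is continuous up to the boundary, so this integral converges to $\int_{\partial P}f\,d\sigma$. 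Assembling the three limits gives the stated identity.

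The main obstacle is the boundary analysis itself: making the inverse-Hessian asymptotics uniform enough to pass to the limit, and in particular controlling neighbourhoods of the lower-dimensional faces, where several $\delta_k$ degenerate at once. Since these strata carry no $d\sigma$-mass, it suffices to show that their contributions to the two boundary integrals are negligible; this requires a product version of the single-face estimate and a covering of the corner regions whose total boundary area tends to zero with $\epsilon$. Throughout, the convexity of $f$ is used in two essential places: it gives the sign $u^{ij}f_{ij}\geqslant 0$ needed for the monotone convergence that establishes integrability, and it supplies the decay $t\,g'(t)\to 0$ that kills the first boundary term.
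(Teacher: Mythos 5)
The paper does not actually prove this lemma---it quotes it from Donaldson (\cite{Don02}, \cite{Don05_1})---and your argument, exhausting $P$ by the truncations $P_\epsilon$, integrating by parts twice on the smooth region, and using the Guillemin boundary asymptotics ($u^{11}=\delta_k+O(\delta_k^2)$, $u^{1j}=O(\delta_k)$, $\sum_j\partial_j u^{1j}\to 1$) to kill the $u^{ij}f_i\nu_j$ term via $t\,g'(t)\to 0$ and to recover $d\sigma$ from the flux term, is essentially the standard proof given in that source, including the sign observation $u^{ij}f_{ij}\geqslant 0$ that yields integrability by monotone convergence. The one imprecision is your claim that the corner contributions can be handled by a covering ``whose total boundary area tends to zero with $\epsilon$'': a strip of fixed width $\rho$ around a codimension-two stratum of $\partial P_\epsilon$ has area of order $\rho$ uniformly in $\epsilon$, so the correct mechanism is a two-parameter limit---bound the corner contribution by $C\rho$ uniformly in $\epsilon$, using boundedness of $(u^{ij})_j$ near corners and the pairing estimates such as $u^{1j}f_j=O(\delta_1\delta_2)\cdot O(\delta_2^{-1})=O(\epsilon)$, and only then let $\rho\to 0$---which is exactly the ``product version'' of the single-face estimate you gesture at but do not carry out.
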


\noindent It follows that if we let $A=S(u)$ for some $u\in\mathcal{S}$ then
\[ \mathcal{L}_A(f) = \int_P u^{ij} f_{ij}\,d\mu .\]
In particular $\mathcal{L}_A(f)\geqslant 0$ for all convex $f$ with equality
only if $f$ is affine linear. The converse is conjectured by Donaldson.
\begin{conj}[see \cite{Don02}]\label{conj:Don}
  Let $A$ be a smooth bounded function on $P$. If $\mathcal{L}_A(f) > 0$ for all
  non affine linear convex functions $f\in\mathcal{C}_1$ 
  then there exists a symplectic potential $u\in\mathcal{S}$
  with $S(u)=A$. 
\end{conj}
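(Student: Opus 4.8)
The natural approach is variational. The plan is to introduce on the space $\mathcal{S}$ the functional
\[ \mathcal{F}_A(u) = -\int_P \log\det(u_{ij})\,d\mu + \mathcal{L}_A(u), \]
and to produce a solution of $S(u)=A$ as a minimiser. First I would compute the first variation: writing the variation of the entropy term as $-\int_P u^{ij}\dot u_{ij}\,d\mu$ and applying Lemma~\ref{lem:intpart} with $f=\dot u$ (a difference of convex functions), the boundary integral $\int_{\partial P}\dot u\,d\sigma$ produced by the entropy term cancels against the one coming from $\mathcal{L}_A$, leaving
\[ \frac{d}{dt}\Big|_{t=0}\mathcal{F}_A(u+t\dot u) = \int_P\big(S(u)-A\big)\dot u\,d\mu. \]
Thus the Euler--Lagrange equation is exactly $S(u)=A$. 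Since $M\mapsto-\log\det M$ is strictly convex on positive-definite matrices and $\mathcal{L}_A$ is linear, $\mathcal{F}_A$ is convex, strictly so modulo the addition of affine functions, which already yields uniqueness of a smooth minimiser up to affine normalisation.

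The role of the stability hypothesis is to make $\mathcal{F}_A$ coercive. Normalising $u$ to vanish together with its gradient at a fixed interior point eliminates the affine ambiguity and forces $u\geqslant0$. On such normalised convex functions I would upgrade the pointwise hypothesis $\mathcal{L}_A(f)>0$ for non-affine $f$ to a uniform lower bound of the form $\mathcal{L}_A(u)\geqslant\epsilon\int_{\partial P}u\,d\sigma$ for some $\epsilon>0$; the passage from a strict pointwise inequality to this uniform one is exactly where the compactness theorem for convex functions in $\mathcal{C}_1$ --- the same tool underlying Theorem~\ref{thm:worst_destab} --- is needed, since a failure would produce a limiting non-affine convex function with $\mathcal{L}_A\leqslant0$. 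As the stability bound forces $\mathcal{L}_A(u)$ to grow at least linearly in $\int_{\partial P}u\,d\sigma$, while the entropy term can decrease only logarithmically in this quantity, the sum $\mathcal{F}_A$ is proper, and a minimising sequence, being uniformly bounded in $\mathcal{C}_1$, subconverges to a convex minimiser $u_\infty$.

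The main obstacle is regularity. The minimiser $u_\infty$ is a priori only a convex function, and one must show both that it is smooth in the interior of $P$ and that $u_\infty-u_0$ extends smoothly to $\overline{P}$, so that genuinely $u_\infty\in\mathcal{S}$ and $S(u_\infty)=A$. Abreu's equation $-(u^{ij})_{ij}=A$ is a fully nonlinear fourth-order PDE; the standard device is to treat $(u^{ij})$ as an auxiliary positive-definite matrix field, splitting the problem into a Monge--Amp\`ere-type equation coupled to a linear fourth-order equation, but establishing the a priori $C^0$, $C^2$ and higher interior estimates, and above all controlling the degenerate behaviour of the Hessian near $\partial P$ so that it matches the model $u_0$, is genuinely hard. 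This is precisely the step that has so far been carried out only in low dimensions or under additional hypotheses, and it is the reason the statement is formulated as a conjecture rather than a theorem.
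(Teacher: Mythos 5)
There is a genuine gap, and indeed there must be: the statement you were given is not a theorem of the paper but an open conjecture of Donaldson's (the paper labels it Conjecture~\ref{conj:Don}, cites \cite{Don02}, and offers no proof), so no argument along these lines --- yours or anyone's --- was available to compare against. What you have written is a faithful outline of Donaldson's own variational program from \cite{Don02}: the functional $\mathcal{F}_A$ is exactly the one the paper borrows in Section~\ref{sec:Calabi}, your first-variation computation is correct (integrating by parts via Lemma~\ref{lem:intpart}, the boundary terms cancel and the Euler--Lagrange equation is $S(u)=A$), the convexity of $-\log\det$ gives uniqueness modulo affine functions, and the upgrade from strict positivity to a uniform bound $\mathcal{L}_A(f)\geqslant\epsilon\int_{\partial P}f\,d\sigma$ by Donaldson's compactness theorem is plausible (the limit of a normalised minimising sequence is either non-affine, contradicting strict positivity by lower semicontinuity of $\mathcal{L}_A$, or identically zero, in which case the boundary normalisation forces $\mathcal{L}_A(f_k)\to 1$, again a contradiction --- note the paper itself only quotes the uniform bound, Proposition 5.2.2 of \cite{Don02}, in the special case $A=S(v)$). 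One can even check that your hypotheses force the necessary condition $\mathcal{L}_A(\ell)=0$ for affine $\ell$, since $f+t\ell$ is non-affine convex for every $t\in\mathbf{R}$.

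But the step you concede in your final paragraph is not a technical loose end; it is the entire content of the conjecture. Properness plus the compactness theorem produce only a convex function $u_\infty\in\mathcal{C}_1$ minimising a lower-semicontinuous extension of $\mathcal{F}_A$; nothing in the variational framework shows that $u_\infty$ is smooth and strictly convex in the interior, still less that $u_\infty-u_0$ extends smoothly to $\overline{P}$ so that $u_\infty\in\mathcal{S}$ and Abreu's fourth-order equation $-(u^{ij})_{ij}=A$ holds with the Guillemin boundary asymptotics. (Even the subconvergence step is delicate: $\mathcal{S}$ is not closed under the convergence the compactness theorem provides, and the entropy term must be interpreted for merely convex limits.) The paper's own machinery --- the optimal destabiliser $\Phi$, Proposition~\ref{prop:L2lower}, the Calabi-flow estimates --- deliberately works around this analytic core rather than through it, which is precisely why the statement is left as a conjecture there. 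So your proposal should be graded as a correct identification of the known strategy together with an honest acknowledgement that its decisive step (a priori estimates and boundary regularity for Abreu's equation, established at the time only in special situations) is missing; it is not a proof.
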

In the special case when $A=\hat{S}$ we simply write $\mathcal{L}$ instead of
$\mathcal{L}_A$. The condition $\mathcal{L}(f)\geqslant0$ for all convex $f$ is
called K-semistability. 
If in addition we require that equality only holds for
affine linear $f$ then it is called K-polystability. Technically we
should say ``with respect to toric test-configurations'', but since we
only deal with toric varieties we will neglect this. For more details on
stability,
in particular on how to construct a test-configuration given a rational
piecewise-linear convex function and how to compute the Futaki invariant,
see~\cite{Don02}.

\section{Optimal destabilising convex functions} \label{sec:worstdestab}

The aim of this section is to show that for an unstable toric variety 
there exists a ``worst destabilising test-configuration''. We introduce
the normalised Futaki invariant
\[ W(f) = \frac{\mathcal{L}(f)}{\Vert f\Vert_{L^2}},\]
for non-zero convex functions $f$ and let $W(0)=0$. The worst destabilising
test-configuration is a convex function minimising $W$. It will only
define a genuine test-configuration if it is rational and piecewise
linear, so in general we should think of it as a limit of
test-configurations.

\begin{thm}\label{thm:worst_destab} Let the toric variety with moment
  polytope $P$ be unstable. Then there exists a convex minimiser
  $\Phi\in\mathcal{C}_1\cap L^2(P)$ for
  $W$ which is unique up
  to scaling. Let us fix the scaling by requiring 
  that 
  \[ \mathcal{L}(\Phi)=-\Vert\Phi\Vert^2_{L^2}.\]
  Letting $B=\hat{S}-\Phi$, we then have $\mathcal{L}_B(f)\geqslant 0$
  for all convex functions $f$ and $\mathcal{L}_B(\Phi)=0$. Conversely
  these two conditions characterise $\Phi$. 
\end{thm}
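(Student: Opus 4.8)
\emph{Existence.} Since $W$ is invariant under positive scaling, set $\lambda=\inf W$, the infimum taken over nonzero convex functions; by hypothesis $\lambda<0$. The plan is to choose a minimising sequence $f_i\in\mathcal{C}_1\cap L^2$ of convex functions normalised by $\Vert f_i\Vert_{L^2}=1$, so that $\mathcal{L}(f_i)\to\lambda$. The $L^2$ bound gives $\int_P|f_i|\,d\mu\leqslant\sqrt{Vol(P)}$, and since $\mathcal{L}(f_i)$ stays bounded the boundary integrals $\int_{\partial P}f_i\,d\sigma$ are bounded too. I would then apply Donaldson's compactness theorem for convex functions in $\mathcal{C}_1$~\cite{Don02} to pass to a subsequence converging in $L^1(P)$ (and locally uniformly in the interior) to a convex limit $\Phi$, with $\int_{\partial P}\Phi\,d\sigma\leqslant\liminf\int_{\partial P}f_i\,d\sigma$. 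Fatou's lemma gives $\Phi\in L^2$ with $\Vert\Phi\Vert_{L^2}\leqslant1$, and lower semicontinuity of the boundary integral gives $\mathcal{L}(\Phi)\leqslant\lambda<0$; in particular $\Phi\neq0$. Then $W(\Phi)=\mathcal{L}(\Phi)/\Vert\Phi\Vert_{L^2}\leqslant\lambda/\Vert\Phi\Vert_{L^2}\leqslant\lambda$, and since $W(\Phi)\geqslant\lambda$ these inequalities must be equalities, forcing $\Vert\Phi\Vert_{L^2}=1$ and showing $\Phi$ is a minimiser. Rescaling $\Phi$ by the positive constant $-\lambda$ then achieves the normalisation $\mathcal{L}(\Phi)=-\Vert\Phi\Vert^2_{L^2}$. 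The delicate point here, which is exactly what Donaldson's theorem supplies, is ruling out degeneration of the minimising sequence along the affine directions; this is the main obstacle in the existence part.

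\emph{The two conditions.} Write $\langle\,\cdot\,,\,\cdot\,\rangle$ for the $L^2(P,d\mu)$ inner product. The normalisation gives $\mathcal{L}_B(\Phi)=\mathcal{L}(\Phi)+\langle\Phi,\Phi\rangle=\mathcal{L}(\Phi)+\Vert\Phi\Vert^2_{L^2}=0$ directly. For the inequality I would use that the convex functions form a cone and compute the first variation of $W$ along the path $\Phi+tg$ with $g$ convex and $t\geqslant0$. Since this path stays in the cone and $\Phi$ minimises, the one-sided derivative at $t=0$ is nonnegative; using linearity of $\mathcal{L}$ and differentiability of the $L^2$ norm at $\Phi\neq0$, and substituting $\mathcal{L}(\Phi)=-\Vert\Phi\Vert^2_{L^2}$, the condition collapses to $\mathcal{L}(g)+\langle\Phi,g\rangle\geqslant0$, which is exactly $\mathcal{L}_B(g)\geqslant0$.

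\emph{Converse and uniqueness.} Suppose conversely that $\Phi$ is convex with $\mathcal{L}_B(f)\geqslant0$ for all convex $f$ and $\mathcal{L}_B(\Phi)=0$, where $B=\hat{S}-\Phi$. The first condition reads $\mathcal{L}(f)\geqslant-\langle\Phi,f\rangle$, so by Cauchy--Schwarz $\mathcal{L}(f)\geqslant-\Vert\Phi\Vert_{L^2}\Vert f\Vert_{L^2}$, that is $W(f)\geqslant-\Vert\Phi\Vert_{L^2}$ for every nonzero convex $f$; the second condition gives $\mathcal{L}(\Phi)=-\Vert\Phi\Vert^2_{L^2}$, hence $W(\Phi)=-\Vert\Phi\Vert_{L^2}$, so $\Phi$ realises the minimum with the stated normalisation. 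For uniqueness let $\Phi_1,\Phi_2$ be normalised minimisers; then $\Vert\Phi_j\Vert_{L^2}=-\lambda$ and $\mathcal{L}(\Phi_j)=-\lambda^2$. The convex function $\Psi=\tfrac12(\Phi_1+\Phi_2)$ satisfies $\mathcal{L}(\Psi)=-\lambda^2$ and $\Vert\Psi\Vert_{L^2}\leqslant-\lambda$, whence $W(\Psi)\leqslant\lambda$; as $\lambda$ is the infimum, equality must hold in the triangle inequality for the $L^2$ norm, and strict convexity of $L^2$ then yields $\Phi_1=\Phi_2$.
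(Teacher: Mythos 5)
Your variational skeleton is sound, and in its second half it matches the paper's proof: the converse direction via Cauchy--Schwarz and the strict-convexity uniqueness argument are exactly the paper's. Your first-variation step is in fact cleaner than the paper's: since the convex functions form a convex cone, $\Phi+tg$ is admissible for every convex $g$ and all $t\geqslant 0$, and the one-sided derivative of $W$ at $t=0$, combined with $\mathcal{L}(\Phi)=-\Vert\Phi\Vert_{L^2}^2$, gives $\mathcal{L}_B(g)\geqslant 0$ in one stroke. The paper instead minimises the auxiliary affine-invariant functional $\tilde{W}_A(f)=\mathcal{L}_A(f)/\Vert f-\pi(f)\Vert_{L^2}$ and obtains the inequality by writing an arbitrary convex $f$ as $c\phi+\psi$ with $\psi$ orthogonal to $\phi$ and perturbing along $\psi$ (Proposition~\ref{prop:Wmin}); your direct computation is correct, including the differentiability of the norm at $\Phi\neq 0$, and buys a genuine simplification there.

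The genuine gap is in the existence step, and it sits precisely where the paper does its real work. You invoke Donaldson's compactness for a minimising sequence normalised by $\Vert f_i\Vert_{L^2}=1$, but Proposition 5.2.6 of~\cite{Don02} applies to \emph{normalised} convex functions --- nonnegative and vanishing at a fixed interior point --- with bounded boundary integrals, and your $f_i$ need not be of this form: they can be negative, with minima drifting anywhere. Your remark that Donaldson's theorem is what rules out ``degeneration along the affine directions'' has it backwards: the theorem presupposes that the affine ambiguity has been killed, and arranging this is exactly why the paper passes to the affine-invariant $\tilde{W}_A$, restricts to normalised representatives with $\int_{\partial P}f_k\,d\sigma=1$, and proves Lemma~\ref{lem:normineq} comparing $\Vert f\Vert_{L^2}$ with $\Vert f-\pi(f)\Vert_{L^2}$ so that $\tilde{W}_A$ is bounded below. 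Relatedly, you assume without proof that $\lambda=\inf W>-\infty$, and your claim that the boundary integrals are bounded ``since $\mathcal{L}(f_i)$ stays bounded'' is circular as written: with your normalisation, boundedness of $\mathcal{L}(f_i)$ \emph{is} the finiteness of $\lambda$. To repair your route you would first show that a convex $f$ with $\Vert f\Vert_{L^2(P)}=1$ obeys interior sup and Lipschitz bounds, hence by the midpoint inequality $f(y)\geqslant 2f(m)-f(x_0)$ a uniform pointwise lower bound on $P^*$; this bounds $W$ below, bounds $\int_{\partial P}f_i\,d\sigma$ on both sides, gives uniformly bounded supporting affine functions $\ell_i$ at a fixed interior point so that Donaldson's proposition applies to $f_i-\ell_i$, and supplies the uniform integrability needed for the bulk term $\int_P f_i\,d\mu$ to converge (your asserted $L^1(P)$ convergence, which the $L^2$ bound does justify but which is not part of the compactness theorem itself). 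As written, the existence paragraph cites the compactness theorem in a form it does not have and omits the lower-boundedness of $W$ --- the two points around which the paper builds Proposition~\ref{prop:extremalopt} and Lemma~\ref{lem:normineq}.
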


\begin{proof}
	Let $A$ be the unique affine linear function so that
        $\mathcal{L}_A(f)=0$ for all affine linear $f$.
	We will show in Proposition
	\ref{prop:Wmin} the existence of a
	convex $\phi\in\mathcal{C}_1\cap L^2$ such that letting
	$B=A-\phi$ we have 
	\[\begin{aligned} \mathcal{L}_B(f) &\geqslant 0\quad\text{ for all
		convex } f\\
		\mathcal{L}_B(\phi) &= 0.
	\end{aligned} \]
	In addition $\phi$ is $L^2$-orthogonal to the affine linear
	functions.
	Let $\Phi = \phi + \hat{S}-A$. We show that this $\Phi$ satisfies the
	requirements of the theorem. 
	
	Note that $B=\hat{S}-\Phi$ with the same $B$ as above, and we
	also have $\mathcal{L}_B(\Phi)=0$. By definition we have
	\[ \mathcal{L}(f) = \mathcal{L}_B(f) + \langle B-\hat{S},f\rangle. \]
	In particular, for all convex $f$
	\[ \mathcal{L}(f) \geqslant \langle B-\hat{S}, f\rangle\geqslant -\Vert
	B-\hat{S}\Vert_{L^2}\Vert f\Vert_{L^2}, \]
	ie. $W(f)\geqslant -\Vert\Phi\Vert_{L^2}$. On the other hand $W(\Phi) =
	-\Vert\Phi\Vert_{L^2}$, so that $\Phi$ is indeed a minimiser for $W$. 

	To show uniqueness, suppose that there are two minimisers $\Phi_1$ and
	$\Phi_2$, and normalise them so that
	$\Vert\Phi_1\Vert_{L^2}=\Vert\Phi_2\Vert_{L^2}$, which in turn implies
	$\mathcal{L}(\Phi_1)=\mathcal{L}(\Phi_2)$. If $\Phi_1$ is not a scalar
	multiple of $\Phi_2$, then we have
	\[ \Vert\Phi_1+\Phi_2\Vert_{L^2} < 2\Vert\Phi_1\Vert_{L^2},\]
	so that
	\[ W(\Phi_1+\Phi_2) =
	\frac{2\mathcal{L}(\Phi_1)}{\Vert\Phi_1+\Phi_2\Vert_{L^2}} <
	\frac{\mathcal{L}(\Phi_1)}{\Vert\Phi_1\Vert_{L^2}},\]
	contradicting that $\Phi_1$ was a minimiser (note that
	$\mathcal{L}(\Phi_1) < 0$).
\end{proof}

\begin{prop}\label{prop:Wmin}
        There exists a convex function $\phi$ such that $B=A-\phi$ (where $A$
	is as in the previous proof) satisfies
	\[ \mathcal{L}_B(f) \geqslant 0\text{ for all convex } f \text{
	and } \mathcal{L}_B(\phi)=0.\] 
	In addition $\phi$ is $L^2$-orthogonal to the affine linear
	functions.
\end{prop}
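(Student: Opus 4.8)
The plan is to realize $\phi$ as a minimiser of the scale-invariant functional
$V(f) = \mathcal{L}_A(f)/\Vert f\Vert_{L^2}$ over nonzero convex functions
$f\in\mathcal{C}_1\cap L^2$, and then to read off the two stated properties from its
Euler--Lagrange inequality. Since $\mathcal{L}_A$ annihilates affine linear functions,
$V(a)=0$ for every nonzero affine linear $a$, so $\inf V\leqslant 0$. If $\inf V=0$ then
$\mathcal{L}_A(f)\geqslant 0$ for all convex $f$ and one simply takes $\phi=0$, $B=A$; so
assume $\inf V<0$, which is the case of interest.

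First I would take a minimising sequence $f_i$ of convex functions normalised by
$\Vert f_i\Vert_{L^2}=1$. Replacing each $f_i$ by $f_i$ minus its $L^2$-projection onto the
affine linear functions leaves $\mathcal{L}_A(f_i)$ unchanged (as $\mathcal{L}_A$ kills
affine functions) while not increasing the norm, so I may assume each $f_i$ is orthogonal to
the affine linear functions. The heart of the argument is the passage to a limit, and here I
would invoke Donaldson's compactness theorem for convex functions in $\mathcal{C}_1$: after
passing to a subsequence, $f_i\to\phi$ in $L^1(P)$ with $\phi$ convex, the interior integrals
$\int_P A f_i\,d\mu$ converge (as $A$ is bounded), and crucially the boundary term is lower
semicontinuous, $\int_{\partial P}\phi\,d\sigma\leqslant\liminf\int_{\partial P}f_i\,d\sigma$.
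Together with the weak-$L^2$ bound $\Vert\phi\Vert_{L^2}\leqslant 1$ this gives
$\mathcal{L}_A(\phi)\leqslant\liminf\mathcal{L}_A(f_i)=\inf V<0$; in particular $\phi\neq 0$.
Since $\mathcal{L}_A(\phi)<0$ and $\Vert\phi\Vert_{L^2}\leqslant 1$ one checks
$V(\phi)\leqslant\inf V$, so $\phi$ is a genuine minimiser with $\Vert\phi\Vert_{L^2}=1$ and
$\mathcal{L}_A(\phi)=\inf V$. Finally I rescale $\phi$ so that
$\mathcal{L}_A(\phi)=-\Vert\phi\Vert^2_{L^2}$.

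With the minimiser in hand, the two properties follow from a one-sided variation. For any
convex $g$ the function $\phi+tg$ is convex for $t\geqslant 0$, so $t\mapsto V(\phi+tg)$ has a
minimum at $t=0$ and its right derivative there is nonnegative. As $\mathcal{L}_A$ is linear
and the norm is smooth away from $0$, computing this derivative and using the normalisation
$\mathcal{L}_A(\phi)=-\Vert\phi\Vert^2_{L^2}$ yields
$\mathcal{L}_A(g)+\langle\phi,g\rangle\geqslant 0$ for every convex $g$. Because
$\mathcal{L}_B(g)=\mathcal{L}_A(g)+\langle\phi,g\rangle$ when $B=A-\phi$, this is exactly
$\mathcal{L}_B(g)\geqslant 0$; taking $g=\phi$ gives
$\mathcal{L}_B(\phi)=\mathcal{L}_A(\phi)+\Vert\phi\Vert^2_{L^2}=0$. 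Applying the inequality to
an affine linear $g$ and to $-g$ (both convex) forces
$\mathcal{L}_A(g)+\langle\phi,g\rangle=0$, and since $\mathcal{L}_A(g)=0$ this gives
$\langle\phi,g\rangle=0$, recovering orthogonality of $\phi$ to the affine linear functions.

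The main obstacle is the existence step, that is the compactness and lower semicontinuity:
one must ensure the minimising sequence neither escapes the class $\mathcal{C}_1$ nor
collapses in the limit, and that no boundary mass is lost in a way that would make
$\mathcal{L}_A(\phi)>\liminf\mathcal{L}_A(f_i)$. This is precisely what Donaldson's compactness
theorem supplies, and nondegeneracy of the limit ($\phi\neq 0$) is guaranteed by the strict
negativity $\mathcal{L}_A(\phi)<0$. The remaining steps---the variational computation and the
algebraic identity relating $\mathcal{L}_A$, $\mathcal{L}_B$ and the inner product---are
routine.
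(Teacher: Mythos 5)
Your overall strategy is the same as the paper's---direct minimisation of the normalised invariant $W_A$ followed by a first-variation inequality---and your Euler--Lagrange step is correct, in fact slightly cleaner than the paper's: differentiating $t\mapsto \mathcal{L}_A(\phi+tg)/\Vert\phi+tg\Vert_{L^2}$ at $t=0^+$ for an arbitrary convex $g$ gives $\mathcal{L}_A(g)+\langle\phi,g\rangle\geqslant 0$ in one stroke, whereas the paper decomposes an arbitrary convex $f$ as $c\phi+\psi$ with $\langle\phi,\psi\rangle=0$ and uses the convexity of $(f+K\phi)/(c+K)$ for large $K$. Your derivation of the affine-orthogonality of $\phi$ by testing $\pm g$ for affine $g$ is likewise fine.

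The genuine gap is in the existence step. Donaldson's compactness result (Proposition 5.2.6 of \cite{Don02}) is stated for \emph{normalised} convex functions---non-negative and vanishing at a fixed interior point---with uniformly bounded boundary integrals $\int_{\partial P}f_k\,d\sigma$, and its conclusion is locally uniform convergence on compact subsets of $P$ together with lower semicontinuity of the boundary integral; it does not apply as cited to your sequence, which is normalised by $\Vert f_i\Vert_{L^2}=1$ and orthogonality to affine functions, and which may take negative values. Two things are then missing. First, a uniform bound on the \emph{unsigned} boundary mass: once $\mathcal{L}_A(f_i)<0$ you only get the one-sided bound $\int_{\partial P}f_i\,d\sigma\leqslant\int_P Af_i\,d\mu\leqslant\Vert A\Vert_{L^2}$ on the signed integral, and you must separately rule out large negative boundary values (this does follow from convexity plus the $L^2$ bound, since a very negative boundary value propagates along a cone of definite measure, but it requires an argument). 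Second, the lower semicontinuity $\int_{\partial P}\phi\,d\sigma\leqslant\liminf\int_{\partial P}f_i\,d\sigma$ is only available, via the cited proposition, for normalised sequences. This is exactly what the paper's detour is engineered to handle: it introduces $\tilde{W}_A(f)=\mathcal{L}_A(f)/\Vert f-\pi(f)\Vert_{L^2}$, which is invariant under adding affine linear functions, so the minimising sequence can be taken in Donaldson's normalised class and rescaled so that $\int_{\partial P}f_k\,d\sigma=1$, making Proposition 5.2.6 applicable verbatim; Lemma~\ref{lem:normineq} then supplies the comparison $\Vert f\Vert_{L^2}\leqslant C\Vert f-\pi(f)\Vert_{L^2}$ needed to bound $\tilde{W}_A$ below and to obtain the uniform $L^2$ bound on $f_k-\pi(f_k)$ that yields $\Vert\phi-\pi(\phi)\Vert_{L^2}\leqslant\liminf\Vert f_k-\pi(f_k)\Vert_{L^2}$. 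To repair your version without this detour you would have to subtract from each $f_i$ a supporting affine function at an interior point, verify that its coefficients are controlled by $\Vert f_i\Vert_{L^2}=1$ through the interior equi-Lipschitz estimates for convex functions, and transfer the compactness and semicontinuity statements back---in effect reconstructing the paper's reduction.
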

The proof of this will take up most of this section. 
Suppose the origin is contained in the interior of $P$. We call a convex
function normalised if it is non-negative and vanishes at the origin. The key to
our proof is a compactness result for normalised convex functions given by
Donaldson in~\cite{Don02}. In order to apply it we need to reduce our
minimisation problem to
one where we can work with normalised convex functions. Let $A$ be the unique
affine linear function so that $\mathcal{L}_A(f)=0$ for all affine linear $f$ as
before, and let us introduce the functional 

\[ W_A(f) = \frac{\mathcal{L}_A(f)}{\Vert f\Vert_{L^2}}.\] 

  \begin{prop}\label{prop:extremalopt}
	  Suppose that $\mathcal{L}_A(f)<0$ for some convex $f$. Then there
	  exists a convex minimiser $\phi\in\mathcal{C}_1\cap L^2$ for $W_A$. 
  \end{prop}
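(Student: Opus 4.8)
The plan is to apply the direct method. Let $m=\inf W_A$, the infimum taken over non-zero convex functions in $\mathcal{C}_1\cap L^2$. Choosing a convex $f$ with $\mathcal{L}_A(f)<0$ shows $m\leqslant W_A(f)<0$. Since $W_A$ is invariant under $f\mapsto\lambda f$ for $\lambda>0$, I would take a minimising sequence $f_i$ scaled so that $\Vert f_i\Vert_{L^2}=1$, so that $\mathcal{L}_A(f_i)\to m$. The difficulty is that $W_A$ is also insensitive to adding affine linear functions in the numerator, since $\mathcal{L}_A$ vanishes on them by the choice of $A$, so I would split off this degree of freedom by writing $f_i=g_i+\ell_i$, where $\ell_i$ is an affine supporting function of $f_i$ at the origin and $g_i=f_i-\ell_i$ is normalised, i.e. non-negative and vanishing at the origin. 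Note that $\mathcal{L}_A(g_i)=\mathcal{L}_A(f_i)$.

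The key estimate is a standard interior bound for convex functions: the value and the gradient at the interior point $0$ are controlled by the $L^2$ norm and the distance from $0$ to $\partial P$. As $\Vert f_i\Vert_{L^2}=1$, this bounds the coefficients of $\ell_i$, hence $\Vert\ell_i\Vert_{L^2}$ and $\int_{\partial P}|\ell_i|\,d\sigma$. Two consequences follow. First, $\Vert g_i\Vert_{L^2}\leqslant 1+\Vert\ell_i\Vert_{L^2}$ is bounded, so from $g_i\geqslant 0$ and the Cauchy--Schwarz estimate $\mathcal{L}_A(g_i)\geqslant-\Vert A\Vert_{L^2}\Vert g_i\Vert_{L^2}$ we get that $m$ is finite. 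Second, from $\int_{\partial P}g_i\,d\sigma=\mathcal{L}_A(g_i)+\int_P Ag_i\,d\mu$ the boundary integrals of the $g_i$ are uniformly bounded. The $g_i$ are therefore normalised convex functions with bounded boundary integral, and Donaldson's compactness result in \cite{Don02} provides a subsequence converging in $L^1(P)$ to a convex function $g$. Passing to a further subsequence, $\ell_i\to\ell$ for some affine linear $\ell$, and I set $\phi=g+\ell$, a convex function; since $\Vert f_i\Vert_{L^2}=1$, a weak $L^2$ limit identifies $\phi\in L^2(P)$.

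It remains to check that $\phi$ is a minimiser. The interior integral $\int_P Af_i\,d\mu$ converges to $\int_P A\phi\,d\mu$ by the $L^1$ convergence and the boundedness of $A$, while the boundary integral is lower semicontinuous under Donaldson's convergence; hence $\mathcal{L}_A(\phi)\leqslant\liminf\mathcal{L}_A(f_i)=m<0$. In particular $\phi\neq 0$ and $\phi\in\mathcal{C}_1$. Weak lower semicontinuity of the $L^2$ norm gives $\Vert\phi\Vert_{L^2}\leqslant 1$. Because the numerator $\mathcal{L}_A(\phi)$ is negative and the denominator lies in $(0,1]$, these two inequalities combine to give $W_A(\phi)=\mathcal{L}_A(\phi)/\Vert\phi\Vert_{L^2}\leqslant m$, and since $m$ is the infimum we conclude $W_A(\phi)=m$, so $\phi$ is the desired minimiser.

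The main obstacle is this compactness step. A minimising sequence for $W_A$ has two sources of non-compactness: the scaling, removed by normalising the $L^2$ norm, and the affine linear direction along which $\mathcal{L}_A$ is constant, removed by the splitting $f_i=g_i+\ell_i$ together with the interior estimate bounding $\ell_i$. Only after both are controlled can Donaldson's theorem be applied to the $g_i$. A final point to watch is that the lower semicontinuity of $\mathcal{L}_A$ and of the $L^2$ norm must point in the direction favourable for a minimisation, which they do precisely because $m<0$.
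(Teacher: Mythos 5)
Your proof is correct, and it takes a genuinely different route from the paper's, although both rest on the same key input, Donaldson's compactness result (Proposition 5.2.6 of \cite{Don02}). The paper kills the two gauge directions (scaling, and the affine linear functions on which $\mathcal{L}_A$ vanishes) by passing to the auxiliary functional $\tilde{W}_A(f)=\mathcal{L}_A(f)/\Vert f-\pi(f)\Vert_{L^2}$, with $\pi$ the $L^2$-projection onto affine functions; it normalises the minimising sequence by $\int_{\partial P}f_k\,d\sigma=1$ and bounds $\tilde{W}_A$ below via Lemma~\ref{lem:normineq}, which is itself proved by a weak-compactness contradiction argument. You instead normalise $\Vert f_i\Vert_{L^2}=1$ and remove the affine direction by subtracting a supporting affine function at the origin, controlled by the interior estimate that $|f(0)|$ and any subgradient at $0$ are bounded by $C(P)\Vert f\Vert_{L^2}$; this estimate is correct and elementary (the upper bound on $f(0)$ is Jensen on a small ball, while a large negative $f(0)$ or a large subgradient forces $|f|$ to be large on a set of definite measure, contradicting the unit $L^2$ norm), and it simultaneously yields the finiteness of $\inf W_A$, the uniform $L^2$ bound on the $g_i$, and, via $\int_{\partial P}g_i\,d\sigma=\mathcal{L}_A(g_i)+\int_P Ag_i\,d\mu$, the bounded boundary integrals needed for Donaldson's theorem. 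Two small points you should make explicit: Donaldson's result gives uniform convergence on compact subsets of $P$ together with lower semicontinuity of the boundary integral, not $L^1(P)$ convergence as such, so the $L^1(P)$ convergence you use requires your uniform $L^2$ bound on the $g_i$ (uniform integrability near $\partial P$); and the boundary lower semicontinuity is stated for the normalised $g_i$ and transfers to $f_i=g_i+\ell_i$ only because $\ell_i\to\ell$ uniformly on $\overline{P}$, which your coefficient bounds give. Comparing what each approach buys: the paper's construction produces a minimiser $g-\pi(g)$ that is automatically $L^2$-orthogonal to affine functions, a property invoked immediately afterwards in the proof of Proposition~\ref{prop:Wmin}, whereas your $\phi$ is not orthogonal by construction; nothing is lost, however, since any minimiser of $W_A$ is automatically orthogonal (otherwise $\phi-\pi(\phi)$ would have the same negative numerator and strictly smaller norm), and in exchange your argument replaces the contradiction-based Lemma~\ref{lem:normineq} by a constructive pointwise estimate and makes the identification of the weak $L^2$ limit immediate from the unit-norm normalisation.
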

  \begin{proof}
	  We introduce one more functional 
	  \[ \tilde{W}_A(f) = \frac{\mathcal{L}_A(f)}{\Vert f -
	  \pi(f)\Vert_{L^2}}, \]
	  where $\pi$ is the $L^2$-orthogonal projection onto affine linear
	  functions. We define $\tilde{W}_A(f)=0$ for affine linear $f$.
	  The advantage of $\tilde{W}_A$ is that it is invariant
	  under adding affine linear functions to $f$, so we can
	  restrict to looking at normalised convex functions. 
	  In addition if we find a
	  minimiser $g$ for $\tilde{W}_A$, then clearly $g-\pi(g)$ is a
	  minimiser for $W_A$.

	  The first task is to show that $\tilde{W}_A$ is bounded from below.
	  For this note that for a normalised convex function $f$ we have
	  \[ \mathcal{L}_A(f) \geqslant -\int_P Af\, d\mu \geqslant -\Vert
	  A\Vert_{L^2}\Vert f\Vert_{L^2}.\]
	  By Lemma~\ref{lem:normineq} this implies
	  \[ \mathcal{L}_A(f) \geqslant -C\Vert A\Vert_{L^2}
	  \Vert f-\pi(f)\Vert_{L^2},\]
	  so that $\tilde{W}_A(f)\geqslant -C\Vert A\Vert_{L^2}$.

	  Now we can choose a minimising sequence $f_k$ for $\tilde{W}_A$, where
	  each $f_k$ is a normalised convex function. In addition we can scale
	  each $f_k$ so that
	  \begin{equation}\label{eq:scaling}
	    \int_{\partial P} f_k\,d\sigma = 1.
	  \end{equation}
	
	  According to Proposition 5.2.6. in~\cite{Don02} we can choose a
	  subsequence which converges  uniformly over compact subsets of $P$ to
	  a convex function which has a continuous extension to a
	  function $\phi$
	  on $P^*$ with 
	  
	  \[ \int_{\partial P} \phi\,d\sigma \leqslant \lim\inf
	  \int_{\partial P} f_k\, d\sigma. \] 

	  As in~\cite{Don02} we find that this implies 
	  \begin{equation} \label{eq:futakilower}
	     \mathcal{L}_A(\phi)\leqslant\lim\inf\mathcal{L}_A(f_k).  
          \end{equation}

	  If we can show that at the same time
	  \begin{equation} \label{eq:L2upper}
		  \Vert \phi-\pi(\phi)\Vert_{L^2}\leqslant \lim\inf\Vert f_k-\pi(f_k)
		  \Vert_{L^2}
	  \end{equation}
	  then together with the previous inequality this will imply
	  that $\phi$ 
	  is a minimiser of $\tilde{W}_A$ and also $\phi\in L^2$. 

	  In order to show Inequality~\ref{eq:L2upper} we first show that the
	  $f_k-\pi(f_k)$ are uniformly bounded in $L^2$. To see this, note that 
	  \[ |\mathcal{L}_A(f_k)| \leqslant \int_{\partial P} f_k\, d\sigma +
	  \Vert A\Vert_{L^\infty}\int_P f_k\,d\mu \leqslant C\int_{\partial P}
	  f_k\,d\sigma = C,\]
	  for some $C>0$ depending on $A$, since the boundary integral of a
	  normalised convex function controls the integral on $P$. Since $f_k$
	  is a minimising sequence for $\tilde{W}_A$, this implies that for some
	  constant $C_1$ we have 
	  \[\Vert f_k - \pi(f_k)\Vert_{L^2}\leqslant C_1.\]
	  Now from the fact that $f_k\to \phi$ uniformly on
	  compact sets $K\subset\subset P$ we have 
	  \[ \Vert \phi-\pi(\phi)\Vert_{L^2(K)} = \lim_k \Vert f_k - \pi(f_k)
	  \Vert_{L^2(K)}\leqslant
	  \lim\inf_k \Vert f_k-\pi(f_k)\Vert_{L^2(P)},\]
	  and taking the limit over compact subsets $K$, we get the
	  Inequality~\ref{eq:L2upper}.  
  \end{proof}

  We now prove a lemma that we have used in this proof.

\begin{lem}\label{lem:normineq}
	There is a constant $C>0$ such that for all normalised
  convex functions $f$ we have
  \[ \Vert f\Vert_{L^2} \leqslant C\Vert f - \pi(f)\Vert_{L^2}.\]
\end{lem}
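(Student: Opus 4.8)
The plan is to argue by contradiction, exploiting the finite-dimensionality of the space of affine linear functions together with the strong compactness properties of convex functions. First I would reduce the claim using the orthogonal decomposition. Since $\pi$ is the $L^2$-orthogonal projection onto the affine linear functions, we have $\Vert f\Vert_{L^2}^2 = \Vert\pi(f)\Vert_{L^2}^2 + \Vert f-\pi(f)\Vert_{L^2}^2$, so it is enough to produce a constant $C'$ with $\Vert\pi(f)\Vert_{L^2}\leqslant C'\Vert f-\pi(f)\Vert_{L^2}$ for all normalised convex $f$; the lemma then follows with $C=\sqrt{C'^2+1}$.

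Suppose no such constant exists. Then there is a sequence of normalised convex functions $f_k$, which we may scale so that $\Vert f_k\Vert_{L^2}=1$, satisfying $\Vert\pi(f_k)\Vert_{L^2} > k\Vert f_k-\pi(f_k)\Vert_{L^2}$. This forces $\Vert f_k-\pi(f_k)\Vert_{L^2}\to 0$ and hence $\Vert\pi(f_k)\Vert_{L^2}\to 1$. Because the affine linear functions form a finite-dimensional subspace of $L^2(P)$, after passing to a subsequence the projections converge, $\pi(f_k)\to\ell$ in $L^2$, where $\ell$ is affine linear with $\Vert\ell\Vert_{L^2}=1$. Combined with $\Vert f_k-\pi(f_k)\Vert_{L^2}\to 0$ this gives $f_k\to\ell$ in $L^2(P)$.

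The heart of the argument is to show that the limit $\ell$ is again a normalised convex function, which will be absurd: a non-negative affine function vanishing at the interior point $0$ must have vanishing linear part, so $\ell\equiv 0$, contradicting $\Vert\ell\Vert_{L^2}=1$. Since the $f_k$ are convex and converge in $L^2$, hence in $L^1_{loc}$, on the open interior of $P$, they converge locally uniformly there. Local uniform convergence lets me pass both normalisation conditions to the limit: evaluating at the origin gives $\ell(0)=\lim_k f_k(0)=0$, and non-negativity gives $\ell\geqslant 0$ on the interior, hence on $P$ by continuity. This produces the desired contradiction and proves the lemma.

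The step I expect to require the most care is precisely this passage of the pointwise normalisation conditions to the limit, since $L^2$ convergence by itself controls no pointwise values. It is convexity that rescues the pointwise information, via the classical fact that convex functions converging in $L^1_{loc}$ on an open set converge locally uniformly there, and it is essential that the origin be an interior point of $P$ so that both $f_k(0)=0$ and the sign condition genuinely survive in the limit.
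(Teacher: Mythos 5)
Your proof is correct and follows essentially the same contradiction argument as the paper: normalise $\Vert f_k\Vert_{L^2}=1$, deduce $\Vert\pi(f_k)\Vert_{L^2}\to 1$, use the finite-dimensionality of the affine linear functions to extract a subsequence with affine limit of unit norm, and use convexity to pass the normalisation conditions ($f_k(0)=0$, $f_k\geqslant 0$) to the limit, forcing the limit to vanish and yielding the contradiction. Your version is if anything slightly more streamlined: you obtain strong $L^2$ convergence directly from the decomposition $f_k=\pi(f_k)+(f_k-\pi(f_k))$ rather than via weak compactness, norm lower semicontinuity and the equality case as in the paper, and you correctly isolate and justify the step the paper compresses into ``it is easy to see that $f$ must be zero'', namely that locally uniform convergence of convex functions on the interior (where the origin lies) is what lets the pointwise conditions survive the limit.
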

\begin{proof}
  We will prove that for some $\epsilon > 0$ we have 
  \begin{equation}\label{ineq:proj}
    \Vert \pi(f)\Vert_{L^2} \leqslant (1-\epsilon)\Vert f\Vert_{L^2}.
  \end{equation}
  The result follows from this, with $C=\epsilon^{-1}$. 

  Suppose Inequality~\ref{ineq:proj} does not hold so 
  that there is a sequence of normalised convex functions $f_k$
  such that $\Vert f_k\Vert_{L^2}=1$ and $\Vert \pi(f_k)\Vert_{L^2}\to
  1$. By possibly taking a subsequence we can assume that $f_k$
  converges weakly to $f$. The projection $\pi$ onto a finite
  dimensional space is compact, so $\pi(f_k)\to \pi(f)$ in norm. In
  particular $\Vert\pi(f)\Vert_{L^2}=1$. It follows that $\Vert
  f\Vert_{L^2}=1$ since the norm is lower semicontinuous. Hence
  $f=\pi(f)$ ie. $f$ is affine linear and also the convergence $f_k\to f$
  is strong. Then there is a subsequence which we also denote by $f_k$
  which converges pointwise almost everywhere to $f$. Since the $f_k$
  are normalised convex functions it is easy to see
  that $f$ must be zero, which is a contradiction, so
  Inequality~\ref{ineq:proj} holds. 
\end{proof}

Finally we can prove Proposition~\ref{prop:Wmin}, which then completes
the proof of Theorem~\ref{thm:worst_destab}.

\begin{proof}[Proof of Proposition~\ref{prop:Wmin}]
  	If $\mathcal{L}_A(f)\geqslant 0$ for all convex $f$ then we take
	$\phi=0$.
	Otherwise Proposition~\ref{prop:extremalopt} implies that there
	is a minimiser $\phi$ for $W_A$, and by rescaling $\phi$ we can ensure
	that
	\[ \mathcal{L}_A(\phi) = -\Vert \phi\Vert^2_{L^2}.\]

	\noindent
	Note that $\phi$ is $L^2$-orthogonal to the affine linear functions
	because it minimises $W_A$. By definition we have that for all $f$
	\[ \mathcal{L}_B(f) = \mathcal{L}_A(f) + \langle A-B,f\rangle_{L^2} =
	\mathcal{L}_A(f) +\langle \phi,f\rangle_{L^2}.\]
	It follows that 
	\[ \mathcal{L}_B(\phi) = \mathcal{L}_A(\phi) + \Vert \phi
	\Vert^2_{L^2} = 0. \]
	
	Now consider perturbations of the form $\phi_t=\phi + t\psi$ 
	which are convex for
	sufficiently small $t$, $\langle \phi,\psi\rangle_{L^2}=0$, 
	but $\psi$ is not necessarily convex. Since $\phi$ 
	minimises $W_A$, we must have
	\[ \left.\frac{d}{dt}\right|_{t=0} \mathcal{L}_A(\phi_t) \geqslant 0, \]
	ie. $\mathcal{L}_A(\psi)\geqslant 0$. 

	We can write any convex function $f$ as $f = c\cdot \phi + \psi$, where
	$c\in\mathbf{R}$ and $\langle \phi,\psi\rangle_{L^2}=0$. Since $\phi$ 
	is convex, we
	have that for all $K>\max\{-c,0\}$ the function
	\[ \frac{f+K\phi}{c+K} = \phi + \frac{1}{c+K}\psi \]
	is convex, so by the previous argument we must have
	$\mathcal{L}_A(\psi)\geqslant 0$. This means that
	\[ \mathcal{L}_B(f) = c\cdot\mathcal{L}_B(\phi) +
	\mathcal{L}_A(\psi) +
	\langle \phi,\psi\rangle = \mathcal{L}_A(\psi)\geqslant 0. \]
	This is what we wanted to show. 
\end{proof}

We finally give a slightly different variational characterisation of
$\Phi$.

\begin{prop}\label{prop:L2lower}
  Consider the set $E\subset L^2(P)$ defined by
  \[ E = \{ h\in L^2\,|\, \mathcal{L}_h(f)\geqslant 0 \text{ for all
  convex } f\}.\]
  If $\Phi$ is the optimal destabilising convex function we found above,
  then $B = \hat{S}-\Phi$ is the unique minimiser of the $L^2$ norm for
  functions in $E$.  
\end{prop}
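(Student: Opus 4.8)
The plan is to exhibit $B$ as the nearest point to the origin in the set $E$, viewed as a subset of the Hilbert space $L^2(P)$. First I would record two structural facts about $E$. Since $\mathcal{L}_h(f)$ is linear in $h$ for each fixed convex $f$, the set $E$ is an intersection of closed half-spaces, hence convex; and it is closed in $L^2$ because $h\mapsto\mathcal{L}_h(f)$ is continuous for each fixed $f\in\mathcal{C}_1\cap L^2$. By Theorem~\ref{thm:worst_destab} we have $B\in E$, so $E$ is a nonempty closed convex set. A second basic observation is that every $h\in E$ satisfies $\mathcal{L}_h(f)=0$ for affine linear $f$, since then both $f$ and $-f$ are convex; applied to the constant function $1$ this gives $\int_P h\,d\mu = \int_{\partial P}d\sigma$ for all $h\in E$, so in particular $\int_P(h-B)\,d\mu=0$ whenever $h\in E$.

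The heart of the argument is the variational inequality $\langle B, h-B\rangle\geqslant 0$ for every $h\in E$. To prove it I would use $\Phi$ itself as the test function. Since $\Phi$ is convex, the defining property of $E$ gives $\mathcal{L}_h(\Phi)\geqslant 0$, that is $\langle h,\Phi\rangle\leqslant\int_{\partial P}\Phi\,d\sigma$, while the equality $\mathcal{L}_B(\Phi)=0$ from Theorem~\ref{thm:worst_destab} gives $\langle B,\Phi\rangle=\int_{\partial P}\Phi\,d\sigma$. Subtracting yields $\langle h-B,\Phi\rangle\leqslant 0$. I would then substitute $\Phi=\hat{S}-B$ and use that $\langle h-B,\hat{S}\rangle=\hat{S}\int_P(h-B)\,d\mu=0$ by the integral identity above. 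This collapses $\langle h-B,\Phi\rangle\leqslant 0$ exactly into $\langle B,h-B\rangle\geqslant 0$.

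Finally I would conclude by the standard expansion
\[ \Vert h\Vert_{L^2}^2 = \Vert B\Vert_{L^2}^2 + 2\langle B,h-B\rangle + \Vert h-B\Vert_{L^2}^2 \geqslant \Vert B\Vert_{L^2}^2, \]
valid for every $h\in E$, which shows that $B$ minimises the $L^2$ norm over $E$; equality forces $\Vert h-B\Vert_{L^2}=0$, giving uniqueness.

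I expect the delicate points to be bookkeeping rather than genuine obstacles: one must check that $\Phi$ is an admissible test function in the definition of $E$ (it is, being convex and in $\mathcal{C}_1\cap L^2$), and one must track the $\hat{S}$ term, whose vanishing against $h-B$ is precisely what turns the single inequality $\langle h-B,\Phi\rangle\leqslant 0$ into the projection inequality. Conceptually the statement simply says that $B$ is the orthogonal projection of $0$ onto the closed convex set $E$, and the extremality of $\Phi$ packaged in Theorem~\ref{thm:worst_destab} is exactly the supporting-hyperplane condition certifying this.
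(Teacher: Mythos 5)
Your proof is correct, and it rests on exactly the same two inputs as the paper's: that $\Phi$ is itself an admissible test function for membership in $E$ (so $\mathcal{L}_h(\Phi)\geqslant 0$ for every $h\in E$), and that $\mathcal{L}_B(\Phi)=0$ from Theorem~\ref{thm:worst_destab}; but your finishing mechanism is genuinely different. The paper writes $0\leqslant\mathcal{L}_h(\Phi)=\mathcal{L}(\Phi)+\langle\hat{S}-h,\Phi\rangle$ with $\mathcal{L}(\Phi)=-\Vert\Phi\Vert_{L^2}^2$, applies Cauchy--Schwarz to get $\Vert\Phi\Vert_{L^2}\leqslant\Vert\hat{S}-h\Vert_{L^2}$, converts this to $\Vert B\Vert_{L^2}\leqslant\Vert h\Vert_{L^2}$ using the same orthogonality-to-constants observation you make (from $\mathcal{L}_h(\pm 1)=0$), and then extracts uniqueness from the equality case of Cauchy--Schwarz: $\hat{S}-h$ must be a positive multiple of $\Phi$ and hence equal to it. You instead subtract $\langle B,\Phi\rangle=\int_{\partial P}\Phi\,d\sigma$ from $\langle h,\Phi\rangle\leqslant\int_{\partial P}\Phi\,d\sigma$, kill the $\hat{S}$ term by $\int_P(h-B)\,d\mu=0$, and arrive at the variational inequality $\langle B,h-B\rangle\geqslant 0$, concluding by expanding $\Vert h\Vert_{L^2}^2$. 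This buys you two small improvements: the quantitative strengthening $\Vert h\Vert_{L^2}^2\geqslant\Vert B\Vert_{L^2}^2+\Vert h-B\Vert_{L^2}^2$, from which uniqueness is immediate with no equality-case analysis, and an argument that visibly certifies $B$ as the orthogonal projection of $0$ onto $E$ --- a viewpoint the paper records only in the remark after the proposition, where closedness and convexity of $E$ give abstract existence and uniqueness of the minimiser. Note that your preliminary observations on closedness and convexity of $E$ are in fact unnecessary for your argument, since you exhibit the minimiser explicitly; everything you actually use ($\Phi\in\mathcal{C}_1\cap L^2$ so it is admissible, $B\in E$, $\mathcal{L}_B(\Phi)=0$, and the normalisation $\int_P(h-B)\,d\mu=0$) is correctly justified.
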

\begin{proof}
  Suppose that $h\in E$. Since $\Phi$ is convex we have
  \begin{equation}\label{eq:Lh}
    0\leqslant \mathcal{L}_h(\Phi) = \mathcal{L}(\Phi) + \langle \hat{S}-h,
    \Phi\rangle.
  \end{equation}
  Since we have 
  $\mathcal{L}(\Phi)=-\Vert\Phi\Vert_{L^2}^2$, we get
  \begin{equation}\label{eq:CS}
    \Vert\Phi\Vert_{L^2}^2 \leqslant \langle \hat{S}-h, \Phi\rangle \leqslant
    \Vert \hat{S}-h\Vert_{L^2} \Vert\Phi\Vert_{L^2},
  \end{equation}
  ie.  
  \[\Vert\Phi\Vert_{L^2}\leqslant\Vert \hat{S}-h\Vert_{L^2}.\]
  Since
  $\mathcal{L}_h(1)=0$ if follows from (\ref{eq:Lh}) that 
  $\hat{S}-h$ is orthogonal to constants. So is $\Phi$, therefore 
  the previous inequality implies
  \[ \Vert B\Vert_{L^2} = \Vert \hat{S}-\Phi\Vert_{L^2}\leqslant \Vert
  h\Vert_{L^2}.\]

  Equality in (\ref{eq:CS}) 
  can only occur if $\hat{S}-h$ is a positive scalar multiple of
  $\Phi$, but then it must be equal to $\Phi$ by (\ref{eq:Lh}). 
\end{proof}

  Note that we can rewrite the definition of the set $E$ as saying that $h\in E$
  if and only if for all convex $f\in \mathcal{C}_1\cap L^2$ we have
  \[ \langle h, f\rangle \leqslant \int_{\partial P} f\, d\sigma.\]
  Thus $E$ is the intersection of a collection of closed affine half
  spaces, and is therefore a closed convex set in $L^2$. It follows
  that there exists a unique minimiser for the $L^2$-norm in $E$. From
  this point of view the content of Theorem~\ref{thm:worst_destab} is
  that this minimiser is concave.

  Also note that Theorem~\ref{thm:worst_destab} still holds
  when we use a different boundary measure $d\sigma$ in defining the
  functional $\mathcal{L}$. In particular when $d\sigma$ is zero on some
  faces, which is a situation we encounter in the next section. The
  proof is identical, except in the normalisation (\ref{eq:scaling}) 
  we still use the old $d\sigma$.

\section{Harder-Narasimhan filtration}\label{sec:HN}

In this section we would like to study the problem of decomposing an unstable
toric variety into semistable pieces. 
This is analogous to the Harder-Narasimhan filtration of an
unstable vector bundle. After making the problem more precise, we will show that
we obtain such a decomposition when the 
optimal destabilising convex function found
in Section~\ref{sec:worstdestab} is piecewise linear. After that we
discuss the implications of such a decomposition and we also look at the
case when the optimal destabiliser is not piecewise linear. 
For convenience we introduce the following terminology. 

\begin{defn}\label{def:semistab}
        Let $Q\subset\mathbf{R}^n$ be a polytope, and let $d\sigma$ be a 
	measure on the
	boundary $\partial Q$. It may well be zero on some edges. Let $A$ be the
	unique affine linear function on $Q$ such that $\mathcal{L}_A(f)=0$ for
	all affine linear functions $f$, where 
	\[ \mathcal{L}_A(f) = \int_{\partial Q} f\, d\sigma - \int_Q
	Af\, d\mu\]
	as before, with $d\mu$ being the standard Lebesgue measure (but
	$d\sigma$ can be different from the one we used before). 

	We say that $(Q,d\sigma)$ is \emph{semistable}, if
	$\mathcal{L}_A(f)\geqslant 0$ for all convex functions. It is
	\emph{stable} if in addition $\mathcal{L}_A(f)=0$ only for affine linear
	$f$. 

	Let us say that a concave $B\in L^2$ 
	is the \emph{optimal density function} for $(Q,d\sigma)$ if
	$\mathcal{L}_B(f)\geqslant 0$ for all convex $f$, and
	$\mathcal{L}_B(B)=0$. Note that such a $B$ exists and is unique
	by the results in Section~\ref{sec:worstdestab}. 
\end{defn}

\begin{rem}
  \begin{enumerate}
    \item
	If in the above definition $Q$ is the moment polytope of a toric variety
	and $d\sigma$ is the canonical boundary measure we have defined before
	then $(Q,d\sigma)$ is stable if and only if the toric variety
	is relatively K-stable (see~\cite{GSzThesis}). It is conjectured
	that in this case the toric variety admits an
	extremal metric (see~\cite{Don02}). 
      \item
	If the measure $d\sigma$ is the canonical measure on \emph{some} edges
	but zero on some others corresponding to a divisor $D$, then it is
	conjectured (see~\cite{Don02}) that stability of $(Q,d\sigma)$ implies
	that the toric variety admits a complete extremal metric on the
	complement of $D$. 
      \item
	Also note that $(Q,d\sigma)$ is semistable precisely when its
	optimal density function is affine linear.  
    \end{enumerate}
\end{rem}

With this terminology we can state precisely what we would like to show (see
also Donaldson~\cite{Don02}).

\begin{conj}\label{conj:HN}
  	Let $(P,d\sigma)$ be the moment polytope of a polarised toric variety
	with the canonical boundary measure $d\sigma$. If $(P,d\sigma)$ is not
	semistable, then it has a subdivision into finitely many
	polytopes $Q_i$, such that if $d\sigma_i$ is the restriction of $d\sigma$
	to the faces of $Q_i$, then each $(Q_i,d\sigma_i)$ is semistable. 
\end{conj}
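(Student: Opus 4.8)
The plan is to build the subdivision directly from the optimal destabilising convex function $\Phi$ produced by Theorem~\ref{thm:worst_destab}, taking the $Q_i$ to be the maximal subpolytopes on which $\Phi$ is affine linear. Since $(P,d\sigma)$ is not semistable, $\Phi$ is not itself affine, so the decomposition is nontrivial, and it is canonical because $\Phi$ is unique up to scale. Writing $B=\hat{S}-\Phi$ for the optimal density function, the two facts I would exploit throughout are that $\mathcal{L}_B(f)\geqslant 0$ for every convex $f$, with equality at $f=\Phi$, and that $B$ is affine linear on each piece $Q_i$ (being $\hat{S}$ minus the linear function $\Phi|_{Q_i}$).

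The first half of the argument, that each piece is semistable, I expect to go through unconditionally. The key is additivity of the functional over the subdivision: since the cut faces $\partial Q_i\cap\mathrm{int}(P)$ carry no boundary measure, one has $\mathcal{L}_B(f)=\sum_i \mathcal{L}^{Q_i}_B(f)$, where $\mathcal{L}^{Q_i}_B$ is formed with $d\sigma_i$ (the restriction of $d\sigma$, hence zero on the cut faces). To test a piece $Q_i$ against a convex $f$, I would perturb $\Phi$ only on $Q_i$, replacing $\Phi|_{Q_i}=\ell_i$ by $(1-t)\ell_i+tf$ while leaving $\Phi$ unchanged elsewhere. For small $t\geqslant 0$ this is convex inside $Q_i$, and because $\Phi$ bends strictly upward across each cut face (the gradient genuinely jumps, by maximality of the linear regions) there is room to keep the glued function globally convex. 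Feeding this family into $\mathcal{L}_B(\,\cdot\,)\geqslant 0 = \mathcal{L}_B(\Phi)$ and letting $t\to 0^+$ gives $\mathcal{L}^{Q_i}_B(f-\ell_i)\geqslant 0$. Running the same perturbation with $f$ affine, now admissible for both signs of $t$, forces $\mathcal{L}^{Q_i}_B$ to annihilate every affine function, so $B|_{Q_i}$ is precisely the canonical affine function $A_i$ of $(Q_i,d\sigma_i)$ and $\mathcal{L}^{Q_i}_{A_i}(f)=\mathcal{L}^{Q_i}_B(f)\geqslant 0$. By the third item of the Remark after Definition~\ref{def:semistab} this is exactly semistability of $(Q_i,d\sigma_i)$. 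The routine points to verify are that the glued perturbations lie in $\mathcal{C}_1\cap L^2$ and that a convex $f$ with possibly unbounded gradient near a cut face can be handled by approximation.

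The genuine obstacle is the second half: that there are only \emph{finitely many} pieces, equivalently that $\Phi$ is piecewise linear with polytopal linear regions. This is not delivered by Theorem~\ref{thm:worst_destab}, which produces $\Phi$ only as an $L^2$ limit of a minimising sequence through Donaldson's compactness result, so the regularity of the minimiser is precisely the difficulty flagged in the introduction. I would attack it through the complementary-slackness structure of the variational problem: on the open set where $\Phi$ is strictly convex the constraint defining $B\in E$ (see Theorem~\ref{prop:L2lower}) is active, which should force $B$ there to solve an Abreu-type equation and, in view of Conjecture~\ref{conj:Don}, to be the scalar curvature of an actual metric, while on the flat regions $B$ is merely affine. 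The interface between these two behaviours is then a free boundary for a fourth-order obstacle problem, and finiteness of the subdivision amounts to a regularity and finiteness statement for that free boundary. I do not expect this to be accessible by soft methods; indeed the introduction warns that piecewise linearity may fail in general. The honest status is therefore that the first half shows every maximal linear region of $\Phi$ is semistable, so Conjecture~\ref{conj:HN} as stated hinges on the open piecewise-linearity problem, while the conditional version assuming $\Phi$ is piecewise linear is Theorem~\ref{thm:piecewiseHN}.
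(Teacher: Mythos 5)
You have read the status of this statement correctly: in the paper it is a \emph{conjecture} and remains open; the paper proves only the conditional version, Theorem~\ref{thm:piecewiseHN}, under the hypothesis that the optimal destabiliser $\Phi$ is piecewise linear, and your identification of finiteness/piecewise linearity as the genuinely open part agrees with the paper. However, your argument for the conditional half --- that each maximal linear region is semistable --- has a concrete gap at the gluing step. The perturbation you propose, replacing $\Phi|_{Q_i}=\ell_i$ by $(1-t)\ell_i+tf$ while leaving $\Phi$ unchanged elsewhere, is in general \emph{discontinuous} across the cut faces: on $\partial Q_i\cap\mathrm{int}(P)$ the inner branch takes the value $\ell_i+t(f-\ell_i)$, which differs from $\Phi=\ell_i$ there unless $f=\ell_i$ on that face. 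The strict gradient jump of $\Phi$ across a cut face is a first-order fact and cannot absorb this zeroth-order mismatch; since a convex function is continuous in the interior of $P$, the glued function fails to be convex for every $t>0$, however small. Already in one dimension this is visible: $P=[-1,1]$, $\Phi(x)=|x|$, $Q_1=[-1,0]$, $f$ a nonzero constant. The same defect is fatal for your two-sided affine test, which you need in order to identify $B|_{Q_i}$ with the canonical affine function $A_i$: an affine perturbation glues continuously only if it vanishes on every cut face of $Q_i$, and as soon as the cut faces affinely span (for instance for a piece not meeting $\partial P$, where moreover $d\sigma_i\equiv 0$) no nonzero affine function does. Approximating $f$ by convex functions with bounded gradient, as you suggest, does not help: the obstruction is the boundary \emph{value} of $f$ on the cut faces, not the size of its gradient.

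The paper's proof of Theorem~\ref{thm:piecewiseHN} avoids gluing altogether, and this is what you should substitute for your perturbation step. It applies the Cartier--Fell--Meyer theorem (Theorem~\ref{thm:CFM}) to the signed measure $d\sigma-B\,d\mu$, which satisfies (\ref{eq:majorise}) by Theorem~\ref{thm:worst_destab}, writing it as $\int_P(T_x-\delta_x)\,d\nu(x)$. The equality $\mathcal{L}_B(\Phi)=0$, combined with the equality case of Jensen's inequality, forces $\Phi$ to be linear on the convex hull of $\mathrm{supp}(T_x)$ for $\nu$-almost every $x$; by maximality of the $Q_i$ that hull lies in a single piece, so the decomposition of the measure restricts to each $Q_i$, and Jensen applied on $Q_i$ gives $\int_{\partial Q_i}f\,d\sigma_i-\int_{Q_i}Bf\,d\mu\geqslant 0$ for \emph{every} convex $f$ on $Q_i$ --- in particular for $\pm(\text{affine linear})$ functions, so that $B|_{Q_i}=A_i$ and semistability of $(Q_i,d\sigma_i)$ come for free, with no approximation or convexity-preserving deformation needed. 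If you want to keep a variational flavour, perturbations of the form $\max\{\Phi,\ell_i+t\eta\}$ do stay convex and continuous, but they yield only one-sided information and do not reach a general convex $f$ on a piece; the measure-theoretic localisation via Theorem~\ref{thm:CFM} is the tool that replaces your gluing. Your closing assessment --- that the full Conjecture~\ref{conj:HN} hinges on the unproved piecewise linearity of $\Phi$, flagged as possibly false in general --- is exactly the paper's position, and your free-boundary speculation, while plausible, is not something the paper attempts.
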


Our main tool is the theorem of Cartier-Fell-Meyer~\cite{CFM} about
measure majorisation. We state it in a slightly different form from the
original one.

\begin{thm}[Cartier-Fell-Meyer]\label{thm:CFM}
  Suppose $d\lambda$ is a signed measure supported on the closed convex
  set $P$. Then 
  \begin{equation}\label{eq:majorise}
    \int_P f\,d\lambda \geqslant 0 
  \end{equation}
  for all convex functions $f$ if and only if $d\lambda$ can be
  decomposed as
  \[ d\lambda = \int_P (T_x - \delta_x)\, d\nu(x),\]
  where each $T_x$ is a probability measure with barycentre $x$, the
  measure $\delta_x$ is the point mass at $x$ and $d\nu(x)$ is a
  non-negative measure on $P$.
\end{thm}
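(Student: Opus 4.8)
The plan is to prove the two directions separately, the forward direction (from the decomposition to the inequality) being a one-line application of Jensen's inequality, and the reverse direction resting on the classical dilation theorem for measures in the convex order. For the forward direction, suppose $d\lambda = \int_P (T_x-\delta_x)\,d\nu(x)$. Then for any convex $f$,
\[ \int_P f\,d\lambda = \int_P\Big(\int_P f\,dT_x - f(x)\Big)\,d\nu(x). \]
Since $T_x$ is a probability measure with barycentre $x$, Jensen gives $\int_P f\,dT_x \geqslant f(x)$, so the inner integrand is non-negative; as $d\nu\geqslant 0$ the whole integral is $\geqslant 0$.

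For the reverse direction, assume $\int_P f\,d\lambda\geqslant 0$ for all convex $f$. Testing against the affine functions $f=\pm 1$ and $f=\pm\ell$ (each of which is both convex and concave) shows that $d\lambda$ has total mass zero and vanishing first moments. Writing the Jordan decomposition $d\lambda = d\lambda^+ - d\lambda^-$, the measures $\lambda^+$ and $\lambda^-$ are then non-negative with equal total mass and equal barycentre, and the hypothesis says precisely that $\int f\,d\lambda^+\geqslant\int f\,d\lambda^-$ for every convex $f$, i.e.\ $\lambda^+$ dominates $\lambda^-$ in the convex order. Setting $\nu=\lambda^-$, it suffices to produce a Markov kernel $x\mapsto T_x$, with each $T_x$ a probability measure of barycentre $x$, such that $\lambda^+=\int_P T_x\,d\nu(x)$; for then $d\lambda = \int_P(T_x-\delta_x)\,d\nu(x)$, as required.

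To construct the kernel I would argue by Hahn--Banach duality between signed Radon measures on the compact set $P$ and $C(P)$. Consider the convex cone generated by the elementary dilations $T_x-\delta_x$, over all $x\in P$ and all probability $T_x$ of barycentre $x$. A function $g\in C(P)$ lies in the dual cone exactly when $\int g\,dT\geqslant g(x)$ for every $T$ of barycentre $x$; since $\inf\{\int g\,dT : \operatorname{bar}(T)=x\}$ equals the value at $x$ of the convex envelope of $g$, this holds if and only if $g$ coincides with its convex envelope, i.e.\ if and only if $g$ is convex. Thus the dual cone is exactly the set of convex functions, and the bipolar theorem identifies the closed cone with $\{\,\eta : \int f\,d\eta\geqslant 0 \text{ for all convex } f\,\}$, which places our $d\lambda$ in the closure of the cone of elementary dilations.

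The main work is then to upgrade this membership in the \emph{closed} cone to an honest integral representation $\int_P(T_x-\delta_x)\,d\nu(x)$ with a measurably varying family $T_x$. Here I would exploit the compactness of $P$ and of the space of probability measures on it: the majorising measure $\lambda^+$ of $\lambda^-$ arises as a weak-$*$ limit of finite spreading operations, and a disintegration and measurable selection argument — equivalently, the martingale coupling of $\lambda^-$ and $\lambda^+$ supplied by Strassen's theorem — yields the kernel $T_x$ satisfying the barycentre condition $\nu$-almost everywhere. The delicate point, and the one where compactness of $P$ is genuinely needed, is to show that the cone of such integral representations is already weak-$*$ closed, so that the limit produced by the bipolar step is in fact attained by an explicit decomposition.
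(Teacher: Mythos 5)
You have correctly identified the logical structure, but a point of comparison first: the paper does not prove Theorem~\ref{thm:CFM} at all --- it quotes it from \cite{CFM} and only remarks that the easy direction follows from Jensen's inequality, which is exactly your first paragraph, and that part of your proposal is complete and correct. Your normalisation of the hard direction is also correct and standard: testing against $\pm 1$ and $\pm\ell$ shows $\lambda(P)=0$ and that the Jordan parts $\lambda^{\pm}$ have equal total mass and equal barycentre, the hypothesis becomes $\lambda^-\preceq\lambda^+$ in the convex order, and with $\nu=\lambda^-$ the theorem reduces to producing a kernel $x\mapsto T_x$ with barycentre $x$ and $\lambda^+=\int_P T_x\,d\nu(x)$.

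The gap is in how you produce that kernel. The bipolar theorem only places $\lambda$ in the weak-$*$ \emph{closure} of the cone generated by the elementary dilations $T_x-\delta_x$; upgrading closure membership to an honest representation $\int_P(T_x-\delta_x)\,d\nu(x)$ is the entire content of the theorem, and you flag this point but do not fill it. Your fallback, Strassen's theorem, is not an independent ingredient here: the martingale-coupling form of Strassen's theorem \emph{is} the Cartier--Fell--Meyer dilation theorem for positive measures, so invoking it makes the argument circular as a proof (though as an appeal to the literature it matches what the paper itself does). To close the gap without Strassen, dualise a compact set rather than a cone: let $M$ be the set of positive measures $\Theta$ on $P\times P$ with first marginal $\lambda^-$ satisfying the martingale constraint $\int h(x)(y_j-x_j)\,d\Theta(x,y)=0$ for all $h\in C(P)$ and all coordinates $j$. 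Then $M$ is weak-$*$ compact and convex (it is cut out by closed linear conditions from the set of measures of mass $\lambda^-(P)$, and contains the diagonal coupling of $\lambda^-$ with itself), and the second-marginal map is weak-$*$ continuous, so its image $K$ is compact and convex. If $\lambda^+\notin K$, Hahn--Banach separation yields $g\in C(P)$ with $\int_P g\,d\lambda^+>\sup_{\Theta\in M}\int g(y)\,d\Theta(x,y)$; but disintegrating any $\Theta\in M$ over $\lambda^-$ and using that $\sup\{\int g\,dT : \mathrm{bar}(T)=x\}$ equals the concave envelope $\hat{g}(x)$ (the dual of the fact you used), the right-hand side is at most $\int_P\hat{g}\,d\lambda^-$, while $g\leqslant\hat{g}$, concavity of $\hat{g}$ and $\lambda^-\preceq\lambda^+$ give $\int_P g\,d\lambda^+\leqslant\int_P\hat{g}\,d\lambda^+\leqslant\int_P\hat{g}\,d\lambda^-$ --- a contradiction. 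Hence $\lambda^+$ is the second marginal of some $\Theta\in M$, and disintegration of $\Theta$ over $\lambda^-$ produces the kernel $T_x$, with barycentre $x$ for $\nu$-almost every $x$ precisely because of the martingale constraint; no separate measurable-selection argument is then required.
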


Note that
the converse of the theorem follows easily from Jensen's inequality:
\begin{lem}[Jensen's inequality] Let $T_x$ be a probability measure with
	barycentre $x$. Then for all convex functions $f$ we have
	\[ f(x) \leqslant \int f(y)\, dT_x(y).\]
	Equality holds if and only if $f$ is affine linear on the convex hull of
	the support of $T_x$.
\end{lem}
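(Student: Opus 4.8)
The plan is to prove the inequality by comparing $f$ against a supporting affine function at the barycentre $x$, and to read off the equality case from where that comparison is sharp.

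First I would construct an affine linear function $\ell$ with $\ell \leqslant f$ everywhere and $\ell(x) = f(x)$. The existence of such a supporting function rests on the observation that $x$ lies in the relative interior of the convex hull $K$ of $\text{supp}(T_x)$. Indeed, if $x$ were a relative boundary point of $K$, there would be a hyperplane supporting $K$ at $x$ within the affine hull of $K$, given by a covector $n$ with $\langle n, y-x\rangle \geqslant 0$ for all $y\in\text{supp}(T_x)$; but then
\[ \int \langle n, y-x\rangle\, dT_x(y) = \Big\langle n, \int(y-x)\, dT_x(y)\Big\rangle = 0 \]
by the barycentre property, and since the integrand is non-negative this forces $\text{supp}(T_x)$ into the hyperplane, contradicting that it is a proper supporting hyperplane of $K$. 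Thus $x$ is a relative interior point of $K$, where the convex function $f$ admits a subgradient; extending the corresponding affine function from the affine hull of $K$ to all of $\mathbf{R}^n$ gives $\ell$.

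Granting $\ell$, the inequality is immediate: using that $\ell$ is affine and that $\int y\, dT_x(y) = x$,
\[ f(x) = \ell(x) = \ell\!\left(\int y\, dT_x(y)\right) = \int \ell(y)\, dT_x(y) \leqslant \int f(y)\, dT_x(y), \]
where the final step uses $\ell \leqslant f$ pointwise. For the equality case I would note that equality holds exactly when $\int (f-\ell)\, dT_x = 0$; since $f-\ell\geqslant 0$ and is continuous, the closed set $\{f=\ell\}$ then has full $T_x$-measure and hence contains $\text{supp}(T_x)$. To upgrade this to all of $K$, I would use Carath\'eodory's theorem: any $z\in K$ is a finite convex combination $z=\sum_i\lambda_i y_i$ of support points, so convexity gives $f(z)\leqslant\sum_i\lambda_i f(y_i)=\sum_i\lambda_i\ell(y_i)=\ell(z)$, while $f\geqslant\ell$ always; hence $f=\ell$ on $K$, i.e. $f$ is affine linear there. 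The converse, that affine linearity of $f$ on $K$ forces equality, is clear since then $f=\ell$ on the support.

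The main obstacle is the very first step: because $x$ need not be an interior point of $P$, one cannot directly invoke the existence of a subgradient in an open neighbourhood, and the relative-interior argument above is exactly what is needed to locate a supporting affine function at $x$ within the correct affine subspace.
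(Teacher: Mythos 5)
Your proof is correct. There is nothing in the paper to compare it against: the lemma is stated without proof, as a standard fact used to deduce the easy direction of the Cartier--Fell--Meyer theorem, so your write-up supplies exactly the details the paper omits. You correctly isolate the one non-routine point, namely that the barycentre $x$ lies in the relative interior of $K=\mathrm{conv}(\mathrm{supp}\,T_x)$, and your argument for it is the standard one: a proper supporting hyperplane of $K$ at $x$ inside $\mathrm{aff}(K)$ would give a nonnegative continuous function $y\mapsto\langle n,y-x\rangle$ with vanishing $T_x$-integral, forcing $\mathrm{supp}(T_x)$, and hence $K$ and $\mathrm{aff}(K)$, into the hyperplane, a contradiction. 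This is indeed what makes the lemma usable in the paper's setting, where $T_x$ may charge $\partial P$ and one cannot take a subgradient of $f$ on an open neighbourhood of $x$. The equality case is also handled correctly in both directions, and your Carath\'eodory step, combined with the subgradient inequality $f\geqslant\ell$ on $K$, yields exactly the stated conclusion that $f$ is affine on the convex hull of the support, not merely on the support itself. Two minor points of hygiene, neither of which affects the argument: the phrase ``$\ell\leqslant f$ everywhere'' claims more than your construction delivers, since the extension of the subgradient off $\mathrm{aff}(K)$ need not lie below $f$ outside $\mathrm{aff}(K)$; but you only ever use $\ell\leqslant f$ on $\mathrm{supp}(T_x)\subset K$, where the subgradient inequality does hold. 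Second, the equality argument uses continuity of $f$ on $\mathrm{supp}(T_x)$ (to pass from $f=\ell$ holding $T_x$-almost everywhere to $f=\ell$ on the closed support), and implicitly that $x\in K$ (the barycentre lies in the convex hull of the support, which is compact here); both are automatic in the paper's application, where $f\in\mathcal{C}_1$ is continuous on $P^*$, but are worth stating as hypotheses if the lemma is read in isolation.
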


Our result is the following
\begin{thm}\label{thm:piecewiseHN}
	Suppose $(P,d\sigma)$ is not semistable, and let $\Phi$ be the optimal
	destabilising convex function found in Section~\ref{sec:worstdestab}. If
	$\Phi$ is piecewise linear, then the maximal 
	subpolytopes of $P$ on which $\Phi$ is linear give the decomposition
	of $P$ into semistable pieces required by
	Conjecture~\ref{conj:HN}.
\end{thm}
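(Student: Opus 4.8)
The plan is to encode the optimality of $\Phi$ as a measure-majorisation statement and then use the equality case of Jensen's inequality to show that the ``mass transport'' witnessing the majorisation never crosses between the linear pieces of $\Phi$. First I would rewrite the relevant functional as integration against a single signed measure. Writing $B=\hat{S}-\Phi$, Theorem~\ref{thm:worst_destab} gives $\mathcal{L}_B(f)\geqslant 0$ for all convex $f$ and $\mathcal{L}_B(\Phi)=0$, and moreover $\mathcal{L}_B$ vanishes on affine linear functions. Set $d\lambda=d\sigma-B\,d\mu$, a signed measure on $\overline{P}$, so that $\mathcal{L}_B(f)=\int_P f\,d\lambda$. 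Since $\mathcal{L}_B(f)\geqslant 0$ for all convex $f$, the Cartier--Fell--Meyer theorem (Theorem~\ref{thm:CFM}) produces a decomposition $d\lambda=\int_P (T_x-\delta_x)\,d\nu(x)$ with each $T_x$ a probability measure of barycentre $x$ and $d\nu\geqslant 0$.

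Next I would exploit the \emph{equality} $\mathcal{L}_B(\Phi)=0$. Writing it as $\int_P(\int \Phi\,dT_x-\Phi(x))\,d\nu(x)=0$ and noting that each integrand is nonnegative by Jensen's inequality (as $\Phi$ is convex), I conclude $\int \Phi\,dT_x=\Phi(x)$ for $\nu$-a.e.\ $x$. By the equality case of Jensen's inequality, $\Phi$ is affine on the convex hull of $\operatorname{supp}T_x$; since $\Phi$ is piecewise linear with maximal linear pieces $Q_i$, this forces $\operatorname{supp}T_x\subset Q_{i(x)}$, the single piece containing its barycentre $x$. This rigidity is the heart of the argument. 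A first consequence is a strengthened inequality: for any function $g$ that is convex on each $Q_i$ separately (but possibly discontinuous across the cuts), Jensen applies on each $\operatorname{supp}T_x$ and yields $\int_P g\,d\lambda\geqslant 0$.

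I would then isolate a single piece by testing against a carefully chosen discontinuous function. Fix $i$ and let $B_i=B|_{Q_i}$, which is affine linear because $\Phi$ is linear on $Q_i$. For a convex $f$ on $Q_i$, set $g=f$ on $Q_i$ and $g=0$ on every other piece. The measure $d\lambda=d\sigma-B\,d\mu$ carries no mass on the interior cut faces (there $d\sigma$ is absent and $B\,d\mu$ is absolutely continuous with respect to Lebesgue measure), so the discontinuities of $g$ are invisible to $d\lambda$, and $\int_P g\,d\lambda=\mathcal{L}^{Q_i}_{B_i}(f)+\sum_{j\neq i}\mathcal{L}^{Q_j}_{B_j}(0)=\mathcal{L}^{Q_i}_{B_i}(f)$, where $\mathcal{L}^{Q_i}_{B_i}(f)=\int_{\partial Q_i} f\,d\sigma_i-\int_{Q_i} B_i f\,d\mu$ uses precisely the restricted boundary measure $d\sigma_i$. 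Hence $\mathcal{L}^{Q_i}_{B_i}(f)\geqslant 0$ for every convex $f$ on $Q_i$. Applying this to an affine $f$ and to $-f$ shows that $\mathcal{L}^{Q_i}_{B_i}$ vanishes on affine functions, so by uniqueness of the normalising affine function $A_i$ (Definition~\ref{def:semistab}) we get $B_i=A_i$, and therefore $\mathcal{L}^{Q_i}_{A_i}(f)\geqslant 0$ for all convex $f$; that is, each $(Q_i,d\sigma_i)$ is semistable, which is exactly the decomposition demanded by Conjecture~\ref{conj:HN}.

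The hardest part will be the localisation step, namely justifying that the globally optimal $\Phi$ ``splits along its pieces.'' Concretely this means establishing the rigidity $\operatorname{supp}T_x\subset Q_{i(x)}$ from the equality case, confirming that $d\lambda$ assigns no mass to the lower-dimensional cut faces so that the discontinuous test function $g$ is legitimate, and dealing carefully with any $\nu$-mass sitting on the cuts (assigning each such $x$ to a piece whose closure contains $\operatorname{supp}T_x$). Once this is in place, the remaining manipulations are the routine bookkeeping indicated above.
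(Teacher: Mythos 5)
Your proposal is correct and follows essentially the same route as the paper: apply Cartier--Fell--Meyer to $d\sigma-B\,d\mu$, use the equality case of Jensen's inequality at $\mathcal{L}_B(\Phi)=0$ to force $\operatorname{supp}T_x$ into a single piece $Q_i$, and then conclude semistability of each $(Q_i,d\sigma_i)$ from Jensen together with the fact that $B|_{Q_i}$ is affine. Your localisation via discontinuous piecewise-convex test functions is merely a repackaging of the paper's direct restriction $d\sigma_i-B\,d\mu|_{Q_i}=\int_{Q_i}(T_x-\delta_x)\,d\nu(x)$, and your explicit verification that $B|_{Q_i}$ equals the normalising affine function $A_i$ spells out a step the paper leaves implicit.
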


\begin{proof}
  Let $\Phi$ be the optimal destabilising convex function, and assume
  that it is piecewise linear. Let us write $(Q_i, d\sigma_i)$ for the
  maximal subpolytopes of $P$ on which $\Phi$ is linear, with
  $d\sigma_i$ being the restriction of $d\sigma$ to the boundary of
  $Q_i$. According to Theorem~\ref{thm:worst_destab} we have 
  \[ \mathcal{L}_B(f) \geqslant 0 \]
  for all convex $f$, where $B=\hat{S}-\Phi$. This means that the signed
  measure $d\sigma - B\,d\mu$ satisfies (\ref{eq:majorise}). It follows
  that there is a decomposition
  \[ d\sigma - B\,d\mu = \int_P (T_x - \delta_x)\, d\nu(x).\]
  Since in addition $\mathcal{L}_B(\Phi)=0$, we have that for  almost
  every $x$ with respect to $d\nu$, the restriction of $\Phi$ to the convex hull of the
  support of $T_x$ is linear. This means that for almost every
  $x$ (w.r.t. $d\nu$) the support of $T_x$ is contained in some $Q_i$, 
  so that for each $i$ we have
  \[ d\sigma_i - B\,d\mu|_{Q_i} = \int_{Q_i} (T_x-\delta_x)\, d\nu(x).\]
  The Jensen inequality implies that for every convex function $f$ on
  $Q_i$ we have
  \[ \int_{\partial Q_i} f\,d\sigma - \int_{Q_i} Bf\,d\mu \geqslant 0.\]
  Since $B$ is linear when restricted to $Q_i$ this means that
  $(Q_i,d\sigma_i)$ is semistable.
\end{proof}

\begin{rem} Note that by the uniqueness of the optimal density function
  we get a canonical decomposition into semistable
  pieces $Q_i$ if we require that the affine linear densities
  corresponding to the $Q_i$ fit together to form a concave function on
  $P$. This corresponds to the condition that in the Harder-Narasimhan
  filtration of an unstable vector bundle
  the slope of the successive quotients is decreasing.
\end{rem}

Suppose as in the theorem that $\Phi$ is piecewise linear and that in
addition all the pieces $Q_i$ that we obtain are in fact stable 
(not just semistable). Then
conjecturally they admit complete extremal metrics. We think of this
purely in terms of symplectic potentials on polytopes, and not in terms
of the complex geometry because when the pieces are not \emph{rational}
polytopes then they do not correspond to complex varieties. So an
extremal metric on a piece $Q$ is a strictly smooth convex function $u$
on $Q$ which has the same asymptotics as a symplectic potential near
faces of $Q$ that lie on $\partial P$, but which has the asymptotics
$-a\log d$ near interior faces. Here $a>0$ is a function on the face 
and $d$ is the
distance to the face. Piecing together these functions we obtain a
``symplectic potential'' $u$ on $P$, which is singular along the
interior boundaries of the pieces $Q_i$, ie. along the codimension one locus
where $\Phi$ is not smooth. Conjecturally the Calabi flow should
converge to this singular symplectic potential. More precisely if $u_t$
is a solution to the Calabi flow, then the sequence of functions $u_t -
tB$ should converge to $u$ up to addition of an affine
linear function, where $B=\hat{S}-\Phi$ as usual.  
A decisive step in this direction would be to
show that along the flow the scalar curvature converges uniformly to
$B$. In the next section we show the much weaker result
that this is true in $L^2$ assuming that the flow exists for all time.

Suppose now that some of the pieces we obtain are semistable. In some
cases it may be possible to decompose these into a finite number of
stable pieces, to which the previous discussion applies. There may be
some semistable pieces though which do not have a decomposition into
finitely many stable pieces. For example suppose that $Q$ is a
trapezium, and that the measure $d\sigma$ is only non-zero on the two
parallel edges. Let us suppose for simplicity that $Q$ is the trapezium
in $\mathbf{R}^2$ with vertices $(0,0),(1,0),(1,l),(0,1)$ for some
$l>0$ and that $d\sigma$ is the Lebesgue measure on the vertical edges. 

\begin{prop} The trapezium $(Q,d\sigma)$ is semistable in the sense of
  Definition~\ref{def:semistab}. Moreover $\mathcal{L}_A(f)=0$ for all
  simple piecewise linear $f$ with crease joining the points
  $(0,u),(1,ul)$ for $0 < u < 1$. 
\end{prop}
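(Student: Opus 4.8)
The plan is to exploit the special geometry of this trapezium. The two non-parallel edges, the segment from $(0,0)$ to $(1,0)$ and the segment from $(0,1)$ to $(1,l)$, lie on lines meeting at the apex $V=(\tfrac{1}{1-l},0)$, so $Q$ is a triangle with apex $V$ truncated by the two vertical edges $x=0$ and $x=1$. A direct computation shows that the crease line through $(0,u)$ and $(1,ul)$ is precisely the line $\ell_u=\{\,y=u\,y_{\max}(x)\,\}$ through $V$, where $y_{\max}(x)=1+(l-1)x$ is the height of $Q$ over $x$. I would therefore fibre $Q$ by the lines $\ell_u$, using coordinates $(x,u)\in[0,1]\times[0,1]$ with $y=u\,y_{\max}(x)$ and Jacobian $d\mu=y_{\max}(x)\,dx\,du$. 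For the second assertion it is enough, since $\mathcal{L}_A$ annihilates affine functions, to treat a single convex hinge $g_{u_0}=\max\{0,\,y-u_0\,y_{\max}(x)\}$, every simple piecewise linear function with crease $\ell_{u_0}$ being affine plus a nonnegative multiple of $g_{u_0}$.

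First I would show that the normalising function $A$ is independent of $y$. Looking for $A$ of the form $a+bx$, the two conditions $\mathcal{L}_A(1)=\mathcal{L}_A(x)=0$ form a linear system whose matrix is the positive definite Gram matrix of $1,x$ against the weight $y_{\max}(x)\,dx$, so they determine $(a,b)$ uniquely. In the fibre coordinates the functional becomes $\mathcal{L}_A(f)=\int_0^1\big(\int_{\ell_u} f\,d\lambda\big)\,du$, where $d\lambda=\delta_{0}+l\,\delta_{1}-A(x)y_{\max}(x)\,dx$ is a fixed, $u$-independent signed measure on $[0,1]$; the two conditions just imposed say exactly that $d\lambda$ has zero mass and zero first moment. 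Since $y$ restricts on each $\ell_u$ to $u\,y_{\max}(x)$, which is affine in $x$, it follows that $\mathcal{L}_A(y)=\int_0^1 u\big(\int(1+(l-1)x)\,d\lambda\big)\,du=0$ automatically, so by uniqueness this $a+bx$ is the normalising affine function.

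The same fibre formula reduces semistability to the single one-dimensional statement that $\int_0^1 \theta\,d\lambda\geqslant 0$ for every convex $\theta$ on $[0,1]$: any convex $f$ on $Q$ restricts on each $\ell_u$ to $x\mapsto f(x,u\,y_{\max}(x))$, which is convex because the parametrisation is affine, and $\int_{\ell_u}f\,d\lambda\geqslant0$ then gives $\mathcal{L}_A(f)\geqslant0$. This inequality is equivalent, since $d\lambda$ has vanishing mass and first moment, to $\Psi(s):=\int(x-s)_+\,d\lambda\geqslant0$ for all $s$. The main obstacle is that the interior density $A(x)y_{\max}(x)$ need not be nonnegative, so this is a genuine signed-measure majorisation rather than an application of Jensen to a probability measure. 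I would handle it by analysing $\Psi$ directly: it vanishes for $s\notin(0,1)$, satisfies $\Psi(0)=\Psi(1)=0$, $\Psi'(0^+)=1>0$, $\Psi'(1^-)=-l<0$, and $\Psi''=-A\,y_{\max}$, which changes sign at most once because $A$ is affine. Since $\int_0^1 A\,y_{\max}\,dx=1+l>0$ forces $A$ to be positive somewhere, $\Psi$ cannot be convex throughout; the remaining convex-then-concave and concave-then-convex possibilities, combined with the boundary data, force $\Psi\geqslant0$ by comparison with tangent lines and chords.

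Finally, for the equality statement I would observe that $g_{u_0}$ restricts on each fibre $\ell_u$ to $(u-u_0)_+\,y_{\max}(x)$, which is affine in $x$; hence $\int_{\ell_u} g_{u_0}\,d\lambda=0$ for every $u$ by the vanishing of the mass and first moment of $d\lambda$, and therefore $\mathcal{L}_A(g_{u_0})=0$. This yields $\mathcal{L}_A(f)=0$ for every simple piecewise linear $f$ with crease $\ell_{u_0}$, completing the argument.
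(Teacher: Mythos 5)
Your proof is correct, and its skeleton is the same as the paper's: both arguments fibre $Q$ by the segments joining $(0,u)$ to $(1,ul)$ (the paper via the substitution $y'=y/(1+(l-1)x)$, you via the coordinates $(x,u)$ with $y=u\,y_{\max}(x)$), both reduce semistability to a one-dimensional inequality for convex functions on $[0,1]$ against the fibre measure $d\lambda=\delta_0+l\,\delta_1-A\,y_{\max}\,dx$, and both reduce that inequality to hinge functions $(x-t)_+$ using the vanishing of mass and first moment. Where you genuinely diverge is in the two computational steps, and your versions are more structural. The paper solves the $3\times 3$ linear system to get the explicit formula $A(x,y)=\frac{1}{l^2+4l+1}\big[12(l^2-1)x-6(l^2-2l-1)\big]$, and then verifies the hinge inequality by noting that $\int_t^1[1+(l-1)x]A(x)(x-t)\,dx-l(1-t)$ is an explicit quartic in $t$ with roots at $t=0,1$, checked by direct computation. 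You instead obtain $\mathcal{L}_A(y)=0$ for free from the fibre structure (the restriction of $y$ to each fibre is affine in $x$), so $A=a+bx$ by uniqueness, and you prove $\Psi(s)=\int(x-s)_+\,d\lambda\geqslant 0$ qualitatively from $\Psi(0)=\Psi(1)=0$, $\Psi'(0^+)=1>0$, $\Psi'(1^-)=-l<0$, and the fact that $\Psi''=-A\,y_{\max}$ changes sign at most once on $(0,1)$, with global convexity excluded by $\int_0^1 A\,y_{\max}\,dx=1+l>0$; I have checked that the tangent-line and chord comparisons do close every remaining sign pattern (concave throughout, concave-then-convex, convex-then-concave). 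This buys robustness — your argument never needs the formula for $A$ and would apply to any weight making $A$ affine in the fibre variable — at the small cost of not recovering the paper's strictness statement (equality in the one-dimensional inequality only for affine $g$), which the explicit quartic yields immediately but which the proposition as stated does not require. Your handling of the equality claim, decomposing any simple piecewise linear function with crease $\ell_{u_0}$ as an affine function plus a nonnegative multiple of $g_{u_0}$ and using $\int(1+(l-1)x)\,d\lambda=0$, matches the paper's observation that fibrewise-affine restrictions give fibrewise equality. One cosmetic slip: the apex $V=\big(\tfrac{1}{1-l},0\big)$ does not exist when $l=1$ (the parallelogram case), but this remark is purely motivational and the fibration, hence your whole argument, goes through for all $l>0$.
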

Recall that a simple piecewise linear function is $\max\{h,0\}$ where
$h$ is affine linear. The line $h=0$ is called the crease.
\begin{proof}
  The first task is to compute the linear function $A$. This can be done
  easily by writing $A(x,y)=ax+by+c$ and solving the linear system of
  equations $\mathcal{L}_A(1),\mathcal{L}_A(x),\mathcal{L}_A(y)=0$ for
  $a,b,c$. As a
  result we obtain
  \[ A(x,y) = \frac{1}{l^2+4l+1}\Big[12(l^2-1)x - 6(l^2-2l-1)\Big]. \]
  It follows that
  \[\begin{aligned}
    \int_Q Af\,d\mu &= \int_0^1\int_0^{1+(l-1)x}
  Af\,dy\,dx \\
  &= \int_0^1\int_0^1 [1+(l-1)x]\, A(x)\, f\big(x,(1+(l-1)x)y'\big)
  \,dx\,dy',
  \end{aligned} \]
  where we have made the substitution $y'=y/(1+(l-1)x)$. Since for a
  fixed $y'$ the function $f\big(x, (1+(l-1)x)y'\big)$
  is convex in $x$, the following lemma tells us that 
  \[ \int_0^1 [1+(l-1)x]\,A(x)\,f\big(x,(1+(l-1)x)y'\big)\, dx \leqslant
  f(0,y') + l\cdot f(1,ly').\]
  Integrating over $y'$ as well get
  \[ \mathcal{L}_A(f) = \int_{\partial Q} f\,d\sigma - \int_Q Af\,d\mu
  \geqslant 0,\]
  which shows that $(Q,d\sigma)$ is semistable. 
  It is clear from the proof that if
  $f$ is linear when restricted to the line segments $y=u+u(l-1)x$ for
  $0 < u <1$ then $\mathcal{L}_A(f)=0$, which gives the second statement
  in the proposition.
\end{proof}

\begin{lem} Let $g:[0,1]\to\mathbf{R}$ be convex. Then we have
  \begin{equation}\label{eq:1d}
    \int_0^1 [1+(l-1)x]\,A(x)\,g(x)\,dx\leqslant g(0) + l\cdot g(1),
  \end{equation}
  where $A(x)$ is as in the previous proposition. Moreover equality
  holds only if $g$ is affine linear. 
\end{lem}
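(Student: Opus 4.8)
The plan is to prove the one-dimensional inequality
\[
\int_0^1 [1+(l-1)x]\,A(x)\,g(x)\,dx \leqslant g(0) + l\cdot g(1)
\]
by exhibiting the signed measure $d\lambda$ on $[0,1]$ defined by
\[
d\lambda = \delta_0 + l\,\delta_1 - [1+(l-1)x]\,A(x)\,dx
\]
and showing that it integrates every convex $g$ to a non-negative value. By the Cartier-Fell-Meyer theorem (Theorem~\ref{thm:CFM}) in dimension one, it suffices to verify that $d\lambda$ satisfies the majorisation condition, which here reduces to two concrete checks: first, that $d\lambda$ has total mass zero, i.e. $\int_0^1[1+(l-1)x]\,A(x)\,dx = 1+l$, and second, that $d\lambda$ has barycentre zero, i.e. $\int_0^1 x\,[1+(l-1)x]\,A(x)\,dx = l$. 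These two moment conditions are exactly the statements $\mathcal{L}_A(1)=0$ and $\mathcal{L}_A(x)=0$ restricted to the trapezium after integrating out the $y$-variable, and they hold by the very definition of $A$ in the previous proposition.

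\emph{Key steps in order.} First I would record the explicit form $[1+(l-1)x]\,A(x)$ as a quadratic in $x$ using the formula for $A$ from the preceding proposition, and confirm the two moment identities above by direct integration (these are routine polynomial integrals). Second, having the zeroth and first moments matching those of the atomic measure $\delta_0 + l\,\delta_1$, I would invoke the elementary one-variable majorisation principle: a signed measure on an interval with vanishing total mass and vanishing barycentre, whose negative part is absolutely continuous and whose positive part consists of atoms at the two endpoints, integrates every convex function non-negatively provided the density is non-negative on the interior. Third, I would verify that $[1+(l-1)x]\,A(x)\geqslant 0$ for $x\in[0,1]$, so that the continuous part is genuinely a non-negative measure and the decomposition of $d\lambda$ into the Cartier-Fell-Meyer form is legitimate.

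For the equality case I would argue as follows: equality in the Jensen-type estimate forces $g$ to be affine linear on the convex hull of the support of each $T_x$ appearing in the decomposition $d\lambda = \int (T_x-\delta_x)\,d\nu(x)$. Since the continuous part has strictly positive density on $(0,1)$, the measure $d\nu$ spreads over the whole interval, and the supports force affine linearity across overlapping subintervals covering $[0,1]$; hence $g$ must be affine linear on all of $[0,1]$. Concretely, I expect each $T_x$ to be supported on $\{0,1\}$ (a two-point distribution with barycentre $x$), so that equality demands $g$ be linear on $[0,1]$ itself, giving the sharp conclusion immediately.

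The main obstacle I anticipate is the positivity check $[1+(l-1)x]\,A(x)\geqslant 0$ on $[0,1]$: since $A$ is affine with a negative constant term and a positive slope (for $l>1$), it changes sign on the interval, so the density is \emph{not} everywhere non-negative, and the naive two-atom Cartier-Fell-Meyer decomposition fails. The genuine difficulty is therefore that one cannot transport all the mass to the two endpoints; instead one must use the freedom in choosing the probability measures $T_x$ to absorb the region where $A<0$ by pairing interior mass appropriately. I would resolve this either by splitting $[0,1]$ at the zero $x_0$ of $A$ and constructing the decomposition piecewise, or — more cleanly — by integrating by parts twice to rewrite $g(0)+l\,g(1) - \int_0^1[1+(l-1)x]\,A(x)\,g(x)\,dx$ as $\int_0^1 G(x)\,g''(x)\,dx$ (in the distributional sense, since $g$ convex has $g''\geqslant 0$ as a measure) plus boundary terms, and then showing the kernel $G(x)\geqslant 0$ on $[0,1]$; the vanishing moments guarantee the boundary terms cancel, and non-negativity of $G$ together with $g''\geqslant 0$ gives the inequality, with equality iff $g''\equiv 0$.
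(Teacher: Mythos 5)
Your final fallback is exactly the paper's proof, so the proposal is essentially correct and lands on the paper's route, reached after a detour that you correctly diagnose yourself. The first plan --- Cartier--Fell--Meyer with the two-atom measure $\delta_0 + l\,\delta_1$ --- does fail in general: writing $w(x)=[1+(l-1)x]\,A(x)$, one computes that $A(0)<0$ precisely when $l>1+\sqrt{2}$, so $w$ changes sign on $[0,1]$ for large $l$ and the mass cannot all be transported to the endpoints (though for $1<l<1+\sqrt{2}$ the density is nonnegative, and then the naive chord estimate $g(x)\leqslant(1-x)g(0)+x\,g(1)$ together with your two moment identities, which do hold and are just $\mathcal{L}_A(1)=\mathcal{L}_A(x)=0$ after integrating out $y$, would already finish). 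Your repair --- use $g''\geqslant 0$ and reduce to nonnegativity of the kernel $G(t)=l(1-t)-\int_t^1 w(x)(x-t)\,dx$ --- is the paper's argument verbatim: the paper subtracts the affine part (legitimate precisely because affine functions give equality, i.e.\ your moment identities), writes $g(x)=\int_0^1 g''(t)\max\{0,x-t\}\,dt$, and checks that $-G(t)$, a quartic in $t$ with roots at $t=0$ and $t=1$, is nonpositive on $[0,1]$ with equality only at the endpoints. The one step you leave unexecuted is that explicit quartic verification, including the strict positivity $G(t)>0$ on $(0,1)$ that your equality case requires; the paper also treats this as a routine computation, so your plan is complete at the same level of detail as the published proof. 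Your equality-case sketch via two-point measures $T_x$ supported on $\{0,1\}$ belongs to the abandoned first route and should be dropped; the correct equality statement is the one you end with, namely that equality forces $g''\equiv 0$ as a measure, hence $g$ affine linear.
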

\begin{proof}
  By an approximation argument we can assume that $g$ is smooth.
  It can be checked directly that when $g$ is affine linear, we have
  equality in (\ref{eq:1d}), so we can also assume that $g(0)=0$ and
  $g'(0)=0$. We can then write
  \[ g(x) = \int_0^x g''(t)\cdot(x-t)\,dt = \int_0^1 g''(t)\cdot
  \max\{0,x-t\}\,dt.\]
  It follows that it is enough to check (\ref{eq:1d}) for the functions
  $g(x)=\max\{0,x-t\}$ for $0\leqslant t\leqslant 1$.  In other words we
  need to show that
  \[ \int_t^1 [1+(l-1)x]\,A(x)\,(x-t)\,dx - l(1-t)\leqslant 0,\]
  for $0\leqslant t\leqslant 1$. This expression is a quartic in $t$,
  whose roots include $t=0$ and $t=1$. It is then easy to see by
  explicit computation that the inequality holds, and equality only
  holds for $t=0,1$. This means that in (\ref{eq:1d}) equality can only
  hold if $g''(t)=0$ for almost every $t\in(0,1)$, ie. if $g$ is affine
  linear.
\end{proof}

As a consequence of the proposition we see that if we decompose the
measure $d\sigma - Ad\mu$ according to Theorem~\ref{thm:CFM} then for
almost every $x$
the $T_x$ that we obtain has support contained in one of the line
segments joining $(0,u),(1,ul)$ for some $0<u<1$. 
It is then clear that $(Q,d\sigma)$ does not have a
decomposition into finitely many stable pieces. On such semistable
pieces the Calabi flow is expected to collapse an $S^1$ fibration. 
This was predicted in~\cite{Don02} for the case when $Q$ is a 
parallelogram. Note that parallelograms correspond to product
fibrations whereas other rational trapeziums correspond to non-trivial
$S^1$ fibrations.

Finally let us see what we can say when $\Phi$ is not piecewise linear.
We can still decompose $P$ into the maximal subsets $Q_i$ on which
$\Phi$ is linear, but now we get infinitely many such pieces and many
will have dimension lower than that of $P$. We still have a
decomposition 
\[ d\sigma - B\,d\mu = \int_P (T_x - \delta_x)\,d\nu,\]
as in the proof of the theorem, but if $Q$ is a lower
dimensional piece, then we cannot simply restrict the measures $d\sigma$
and $B\,d\mu$ to $\partial Q$ and $Q$ respectively. This is similar to
the case of trapeziums above where the $Q_i$ are the line
segments joining the points $(0,u),(1,ul)$. The correct measure on the
line segment is given by $[1+(l-1)x]A(x)\,d\mu$ and on the boundary it's
a weighted sum of the values at the endpoints. 
The lemma shows that with respect to these
measures the line segments are stable. This is what we try to imitate in
the general case. 

Suppose then that $Q$ is such a lower dimensional piece and that we can find a 
closed convex neighbourhood $K$ of $Q$ with non-empty interior
such that $K\cap\partial P$ also has nonempty interior, and
for almost every
$x\in K$ the support of $T_x$ is contained in $K$.  
For each such $K$ we have
\[ \int_{\partial K} f\,d\sigma - \int_{K} Bf\,d\mu
\geqslant 0,\]
for all convex $f$. Suppose we have a sequence of such
neighbourhoods $K_i$ such that $\bigcap_i K_i=Q$.
Then, after perhaps choosing a subsequence of the $K_i$,
we can define a measure $d\tilde{\sigma}$ on $\partial Q$ by
\[ \int_{\partial Q} f\,d\tilde{\sigma} = \lim_i
\frac{1}{Vol( K_i,d\mu)}\int_{\partial
 K_i} \tilde{f}\,d\sigma,\]
where $\tilde f$ is a continuous extension of a continuous function $f$ on
$Q$. By choosing a further subsequence we can similarly
define $\tilde{B}\,d\mu$ and we
have that for every convex function $f$ on $Q$,
\[ \int_{\partial Q} f\,d\tilde{\sigma} - \int_{Q} f\tilde{B}\,d\mu
\geqslant 0,\]
since the corresponding inequality holds for each $ K_i$.
Note however that $\tilde{B}$ is not necessarily linear on $Q$, and
also $d\tilde{\sigma}$ is not necessarily a constant multiple of the
Lebesgue measure on the faces of $Q$. We thus obtain a decomposition of
$P$ into infinitely many pieces which are semistable in a suitable
sense. As in the case
of semistable trapeziums we discussed above, one expects
collapsing to occur along the Calabi flow. See the end of the next
section for an indication of why such collapsing must occur. 

We have not said how to construct a suitable sequence of closed
neighbourhoods $ K_i$. One way is to look at the
subdifferential of $\Phi$. At a point $x$ we write
$D\Phi(x)\subset(\mathbf{R}^n)^*$ for the closed set of supporting hyperplanes to
$\Phi$ at $x$. Choose $x_0$ in the interior of
$Q$, ie. in $Q\setminus\partial Q$. 
Note that for all interior points $D\Phi(x_0)$ is the same set, and for
points on the boundary of $Q$ it is strictly larger since $Q$ is a
maximal subset on which $\Phi$ is linear. Now we can simply define
\[  K_i = \{ x\in P\,|\, D\Phi(x)\cap \overline{B}_{1/i}(D\Phi(x_0))\not=0\},\]
where $\overline{B}_{1/i}(D\Phi(x_0))$ denotes the points of distance at most $1/i$
from $D\Phi(x_0)$. So $ K_i$ is the set of points with
supporting hyperplanes sufficiently close to those at $x_0$. These are necessarily
closed sets with nonempty interior (here we use that $Q$ is of strictly
lower dimension than $P$, so we can choose a sequence of points not in
$Q$ approaching an interior point of $Q$) and the intersection of all of them
is $Q$. Also note that for almost every $x$, any $y$ in the support of 
$T_x$ satisfies $D\Phi(x)\subset D\Phi(y)$ since $\Phi$ is linear on the
convex hull of $\text{supp}(T_x)$. This means that if $x\in K_i$ then
also $y\in K_i$. 

\section{The Calabi flow}\label{sec:Calabi}

In this section we study the Calabi flow on toric varieties, assuming
that it exists for all time. 
In terms of symplectic potentials the Calabi flow is given by 
the equation
\[ \frac{\partial}{\partial t} u_t = -S(u_t) = (u_t^{ij})_{ij},\]
where $u_t\in\mathcal{S}$ for $t\in[0,\infty)$. This can be seen by
differentiating the expression (\ref{eq:sympot}) defining the symplectic
potential and using the definition of the Calabi flow.

The aim of this section is to prove the following.
\begin{thm}\label{thm:calflow}
  Suppose that $u_t$ is a solution of the Calabi flow for all
  $t\in[0,\infty)$. Then
  \[ \lim_{t\to\infty} \Vert S(u_t)-\hat{S} +\Phi \Vert_{L^2} 
  =0,\]
  where $\Phi$ is the optimal destabilising convex function from
  Theorem~\ref{thm:worst_destab}. Moreover 
  \[ \Vert\Phi\Vert_{L^2} = \inf_{u\in\mathcal{S}}\Vert S(u) -
  \hat{S}\Vert_{L^2}. \]
\end{thm}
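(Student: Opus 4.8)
The plan is to establish the theorem in two parts, matching its two displayed conclusions. The first part is the convergence statement $\Vert S(u_t)-\hat S+\Phi\Vert_{L^2}\to 0$, and the second is the identity $\Vert\Phi\Vert_{L^2}=\inf_{u\in\mathcal S}\Vert S(u)-\hat S\Vert_{L^2}$. I expect the second part to follow from the first together with the variational characterisations already proved in Section~\ref{sec:worstdestab}, so the analytic heart of the argument lies in the convergence.

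**For the convergence,** the natural strategy is to track the evolution of the Calabi functional $\mathcal{C}(u_t)=\Vert S(u_t)-\hat S\Vert_{L^2}^2$ along the flow, or rather a Mabuchi-type functional whose gradient flow the Calabi flow is. First I would recall from~\cite{Don02} the relevant Mabuchi-type functional $\mathcal{M}$ on $\mathcal{S}$ whose derivative along a path $\dot u=\psi$ is $-\int_P \psi\,(S(u)-\hat S)\,d\mu$ (up to normalisation), so that along the Calabi flow $\dot u_t=-S(u_t)$ one has $\frac{d}{dt}\mathcal{M}(u_t)=-\Vert S(u_t)-\hat S\Vert_{L^2}^2\leqslant 0$, i.e. $\mathcal{M}$ decreases. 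The key lower bound is to show $\mathcal{M}$ is bounded below along the flow by comparing it against the optimal destabiliser: using $\mathcal{L}_B(f)\geqslant 0$ for all convex $f$ (Theorem~\ref{thm:worst_destab}), where $B=\hat S-\Phi$, one obtains a linear-in-$t$ lower bound for $\mathcal{M}(u_t)$ of the form $\mathcal{M}(u_t)\geqslant \mathcal{M}(u_0)-t\,\Vert\Phi\Vert_{L^2}^2 + o(t)$ type estimates, or more precisely a bound showing the decrease rate cannot exceed $\Vert\Phi\Vert_{L^2}^2$. Combined with $\frac{d}{dt}\mathcal{M}=-\Vert S(u_t)-\hat S\Vert_{L^2}^2$, this forces $\liminf_t\Vert S(u_t)-\hat S\Vert_{L^2}\leqslant\Vert\Phi\Vert_{L^2}$.

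**To upgrade this to the sharp convergence,** I would use the lower bound on the Calabi functional coming from the destabiliser in the other direction: for \emph{any} $u\in\mathcal S$, writing $A=S(u)$, Lemma~\ref{lem:intpart} gives $\mathcal{L}_{S(u)}(f)=\int_P u^{ij}f_{ij}\,d\mu\geqslant 0$, so $S(u)\in E$ in the notation of Proposition~\ref{prop:L2lower}, whence $\Vert S(u)-\hat S\Vert_{L^2}=\Vert S(u)-\hat S\Vert_{L^2}\geqslant\Vert B-\hat S\Vert_{L^2}=\Vert\Phi\Vert_{L^2}$ by the minimising property of $B$ in $E$. This immediately gives $\inf_{u\in\mathcal S}\Vert S(u)-\hat S\Vert_{L^2}\geqslant\Vert\Phi\Vert_{L^2}$, which is the hard (lower-bound) direction of the second conclusion; the reverse inequality then follows from the convergence in the first part since it exhibits a sequence realising $\Vert\Phi\Vert_{L^2}$ in the limit. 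To pin down that the limit is exactly $-\Phi$ and not merely that the norm converges, I would argue that any $L^2$-subsequential limit $S_\infty-\hat S$ of $S(u_t)-\hat S$ lies in the closed convex set $\{h-\hat S: h\in E\}$ and has norm $\Vert\Phi\Vert_{L^2}$, hence by the uniqueness of the norm-minimiser in the closed convex set $E$ (the remark following Proposition~\ref{prop:L2lower}) must equal $-\Phi$; since the limit is unique, the full sequence converges.

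**The main obstacle** I anticipate is the passage from $\liminf_t\Vert S(u_t)-\hat S\Vert_{L^2}=\Vert\Phi\Vert_{L^2}$ to genuine convergence of the full trajectory, which requires controlling the flow well enough to extract strong $L^2$ limits and to justify that subsequential limits of $S(u_t)$ lie in the \emph{closed} set $E$. This demands some uniform control — for instance that the symplectic potentials $u_t-tB$ (or a suitable normalisation) stay in a compact family, or at least that the scalar curvatures form a precompact set in $L^2$ — which in turn relies on the boundedness of $\mathcal{M}$ along the flow and on Donaldson's compactness theorem (Proposition 5.2.6 of~\cite{Don02}) applied to the convex functions involved. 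Making the convexity/closedness arguments rigorous at the boundary of $P$, where the integration-by-parts of Lemma~\ref{lem:intpart} and the measure $d\sigma$ enter, is the delicate technical point; this is precisely where the analysis parallels the author's earlier ruled-surface work~\cite{GSz07_1}, and I would follow that template closely.
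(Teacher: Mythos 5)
Your skeleton matches the paper's (a Donaldson--Mabuchi type functional decreasing along the flow, bounded below via the destabiliser; $S(u)\in E$ plus Proposition~\ref{prop:L2lower} for the infimum, which you do correctly), but the analytic heart of the first part is asserted rather than proved, and the ingredient you cite cannot deliver it. Writing $\mathcal{F}_B(u)=-\int_P\log\det(u_{ij})\,d\mu+\mathcal{L}_B(u)$, positivity of $\mathcal{L}_B$ only yields $\mathcal{F}_B(u_t)\geqslant-\int_P\log\det(u_{t,ij})\,d\mu$, and this term is a priori unbounded below as the flow degenerates ($u_t$ grows roughly linearly, so $\det(u_{t,ij})$ can blow up). The paper closes exactly this gap with two lemmas for which you offer no substitute: (i) for a fixed reference $v\in\mathcal{S}$ one has $\mathcal{L}_{S(v)}(u_t)\leqslant C(1+t)$, proved by computing $\frac{d}{dt}\mathcal{F}_{S(v)}(u_t)=\int_P(S(v)-S(u_t))S(u_t)\,d\mu\leqslant C$ (monotonicity of the Calabi energy) and then converting $\mathcal{F}_{S(v)}$ into a bound on $\mathcal{L}_{S(v)}$ via $\log\det M\leqslant\frac{1}{2}\mathrm{Tr}(M)$; and (ii) $-\int_P\log\det(u_{ij})\,d\mu\geqslant-C_1\log\mathcal{L}_{S(v)}(u)-C_2$, by two applications of the convexity of $-\log$. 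Together these give the sub-linear bound $\mathcal{F}_B(u_t)\geqslant-C_1\log(1+t)-C_2$, which combined with the derivative inequality $\frac{d}{dt}\mathcal{F}_B(u_t)\leqslant-\Vert S(u_t)-B\Vert_{L^2}^2$ produces a subsequence with $\Vert S(u_k)-B\Vert_{L^2}\to0$. Note that this derivative inequality itself uses the \emph{concavity} of $B$ (so that $\int_P u_t^{ij}B_{ij}\,d\mu\leqslant0$) and the identity $\mathcal{L}_B(B)=0$ from Theorem~\ref{thm:worst_destab} --- two facts your sketch never invokes; $\mathcal{L}_B(f)\geqslant0$ alone does not suffice. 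Your proposed bound ``$\mathcal{M}(u_t)\geqslant\mathcal{M}(u_0)-t\Vert\Phi\Vert_{L^2}^2+o(t)$'' is precisely what needs proving, and no mechanism for it is given.

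The second gap --- which you yourself flag as the ``main obstacle'' --- is in fact a non-issue, and your detour through strong subsequential limits is the wrong move: no precompactness of $\{S(u_t)\}$ and no compactness theorem for the flow are needed. Once one knows $\Vert S(u_t)\Vert_{L^2}\to\Vert B\Vert_{L^2}$ (subsequential convergence, monotone decrease of the Calabi energy, and the a priori bound coming from $S(u)\in E$), the paper finishes with pure Hilbert-space convexity: since $E$ is convex and $B$ is its unique norm-minimiser, $\frac{1}{2}(S(u_t)+B)\in E$ forces $\Vert S(u_t)+B\Vert_{L^2}\geqslant2\Vert B\Vert_{L^2}$, and the parallelogram law gives $\Vert S(u_t)-B\Vert_{L^2}^2\leqslant2(\Vert S(u_t)\Vert_{L^2}^2-\Vert B\Vert_{L^2}^2)\to0$ for the full trajectory. (Your subsequential-limit argument could alternatively be repaired with \emph{weak} compactness --- $E$ is closed and convex, hence weakly closed, and weak convergence plus convergence of norms implies strong convergence --- but either way no control of the potentials $u_t-tB$ is required.) So the infimum statement and the identification of the limit are correct in outline, but as written the proposal establishes neither the key sub-linear lower bound on the functional nor the convergence of the full flow.
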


The first thing to note is that the Calabi functional is decreased under the
flow, ie. $\Vert S(u_t)\Vert_{L^2}$ is monotonically decreasing.
This is well-known and can be seen easily by computing the derivative. 

Recall that for $A\in L^\infty(P)$ we have defined the functional
\[\mathcal{L}_A(u) = \int_{\partial P} u\, d\sigma - \int_P Au\, d\mu. \]
Following \cite{Don02} let us also define 
\[\mathcal{F}_A(u) = -\int_P \log \det(u_{ij}) + \mathcal{L}_A(u),\]
for $u\in \mathcal{S}$. That this is well defined for all 
$u\in\mathcal{S}$ is shown in~\cite{Don02}. 
In the special case when $A=\hat{S}$, the functional 
$\mathcal{F}_{\hat{S}}$ is the same as the well known
Mabuchi functional and is also monotonically decreasing under the flow.
For general $A$ it is not monotonic, but will nevertheless be useful.

Finally recall that by Lemma~\ref{lem:intpart}, for $u,v\in\mathcal{S}$
we have
  \begin{equation}\label{eq:LSu}
    \mathcal{L}_{S(v)}(u) = \int_P v^{ij} u_{ij}\,d\mu. 
  \end{equation}

The proof of Theorem~\ref{thm:calflow} relies on the following two lemmas. 
\begin{lem} \label{lem:LAdecay}
  Choose some $v\in\mathcal{S}$. 
  If $u_t$ is a solution of the Calabi flow, we have
  \[ \mathcal{L}_{S(v)}(u_t)\leqslant C(1+t), \]
  for some constant $C>0$. 
\end{lem}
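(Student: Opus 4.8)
The plan is to avoid differentiating $\mathcal{L}_{S(v)}(u_t)$ in time, since that produces an awkward boundary integral of $S(u_t)$ over $\partial P$, and instead to estimate $\mathcal{L}_{S(v)}(u_t)$ directly by comparing it with $\mathcal{L}_{S(u_t)}(u_t)$, which turns out to be constant. The two ingredients I would use are a linear-in-$t$ bound on $\Vert u_t\Vert_{L^2}$ coming from the fact that the flow moves at bounded $L^2$-speed, and the algebraic identity (\ref{eq:LSu}), which rewrites these functionals as Hessian pairings.

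First I would note that along the flow $\partial_t u_t=-S(u_t)$, so by the stated monotonicity of the Calabi functional we have $\Vert S(u_t)\Vert_{L^2}\leqslant\Vert S(u_0)\Vert_{L^2}=:M$ for all $t$. Hence $\Vert\partial_t u_t\Vert_{L^2}\leqslant M$, and integrating in time (via the triangle inequality for the $L^2$-valued integral $u_t-u_0=\int_0^t\partial_s u_s\,ds$) gives $\Vert u_t-u_0\Vert_{L^2}\leqslant Mt$, so that $\Vert u_t\Vert_{L^2}\leqslant\Vert u_0\Vert_{L^2}+Mt\leqslant C(1+t)$. Since $\mathcal{L}_{S(w)}$ annihilates affine linear functions for every $w\in\mathcal{S}$, all the quantities below are unchanged if $u_t$ is replaced by $u_t-\pi(u_t)$, which lets me work with a representative orthogonal to the affine functions when convenient.

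The key step is an identity. Applying (\ref{eq:LSu}) with both arguments equal to $u_t$ gives $\mathcal{L}_{S(u_t)}(u_t)=\int_P u_t^{ij}(u_t)_{ij}\,d\mu$, and since $u_t^{ij}$ is the inverse of the Hessian $(u_t)_{ij}$ we have $u_t^{ij}(u_t)_{ij}=n$ pointwise, so $\mathcal{L}_{S(u_t)}(u_t)=n\,Vol(P,d\mu)$ is constant in $t$. Writing out the definitions of the two functionals,
\[ \mathcal{L}_{S(v)}(u_t)-\mathcal{L}_{S(u_t)}(u_t)=\int_P\big(S(u_t)-S(v)\big)u_t\,d\mu, \]
so that
\[ \mathcal{L}_{S(v)}(u_t)=n\,Vol(P,d\mu)+\int_P\big(S(u_t)-S(v)\big)u_t\,d\mu. \]
The remaining integral is controlled by Cauchy--Schwarz: its absolute value is at most $\big(\Vert S(u_t)\Vert_{L^2}+\Vert S(v)\Vert_{L^2}\big)\Vert u_t\Vert_{L^2}\leqslant\big(M+\Vert S(v)\Vert_{L^2}\big)C(1+t)$. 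Combining this with the constant term yields $\mathcal{L}_{S(v)}(u_t)\leqslant C'(1+t)$, which is the desired bound.

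The main obstacle is the linear growth estimate $\Vert u_t\Vert_{L^2}\leqslant C(1+t)$: everything rests on controlling the flow speed, which in turn relies essentially on the uniform boundedness of $\Vert S(u_t)\Vert_{L^2}$ via monotonicity of the Calabi functional. A technical point to verify is that $u_0\in L^2$ and that the pairing $\int_P(S(u_t)-S(v))u_t\,d\mu$ is finite and invariant under adding affine functions to $u_t$; the latter holds because both $S(u_t)$ and $S(v)$ are densities whose associated $\mathcal{L}$ kills affine functions, so their difference pairs to zero against any affine function, keeping the identity consistent with the affine-invariance of $\mathcal{L}_{S(v)}(u_t)$. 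Notably this route needs no boundary regularity of $S(u_t)$, which is exactly why I prefer it to the time-derivative computation.
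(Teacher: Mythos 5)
Your proof is correct, and it takes a genuinely different route from the paper's. The paper works with the functional $\mathcal{F}_A(u)=-\int_P\log\det(u_{ij})\,d\mu+\mathcal{L}_A(u)$ for $A=S(v)$: it differentiates along the flow to get $\frac{d}{dt}\mathcal{F}_A(u_t)=\int_P(A-S(u_t))S(u_t)\,d\mu\leqslant C$ (using the same monotone decay of $\Vert S(u_t)\Vert_{L^2}$ that you invoke), integrates to $\mathcal{F}_A(u_t)\leqslant C(1+t)$, and then removes the $\log\det$ term via the pointwise inequality $\log\det M\leqslant\tfrac{1}{2}\mathrm{Tr}(M)$ applied to $M=(v^{ik}u_{kj})$, which yields $\mathcal{F}_A(u)\geqslant\tfrac{1}{2}\mathcal{L}_A(u)+C_1$. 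You bypass $\mathcal{F}_A$ entirely: your two ingredients are the trace identity $\mathcal{L}_{S(u_t)}(u_t)=\int_P u_t^{ij}(u_t)_{ij}\,d\mu=n\,Vol(P,d\mu)$, which is a legitimate application of (\ref{eq:LSu}) since $u_t\in\mathcal{S}$ is itself continuous, convex and smooth in the interior, and the speed bound $\Vert u_t\Vert_{L^2}\leqslant\Vert u_0\Vert_{L^2}+Mt$ coming from $\partial_t u_t=-S(u_t)$ together with Minkowski's integral inequality. The remaining details check out: $u_0$ is continuous on the compact set $\overline{P}$, hence in $L^2$; $S(v)$ is bounded because $v\in\mathcal{S}$ defines a smooth metric on a compact manifold; the difference formula $\mathcal{L}_{S(v)}(u_t)-\mathcal{L}_{S(u_t)}(u_t)=\int_P(S(u_t)-S(v))u_t\,d\mu$ is immediate from the definitions since the boundary terms cancel; and Cauchy--Schwarz closes the estimate, in fact giving the two-sided bound $|\mathcal{L}_{S(v)}(u_t)|\leqslant C(1+t)$, slightly more than the lemma asks. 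One advantage of your route is that it only applies the integration-by-parts lemma to the convex function $u_t$, whereas the paper's time-derivative computation implicitly applies Lemma~\ref{lem:intpart} with $f=S(u_t)$, which is not convex, so the lemma is tacitly being extended there; it also produces the linear $L^2$ growth bound on $u_t$ itself, which could be of independent use. What the paper's route buys is that the intermediate estimate $\mathcal{F}_A(u_t)\leqslant C(1+t)$ belongs to the same family of $\mathcal{F}$-functional arguments (with $A=S(v)$ and $A=B$) that drive the rest of Section~\ref{sec:Calabi}, so the two lemmas and the main theorem share a common mechanism; your argument, while more elementary, is tailored to this one conclusion.
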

\begin{proof}
Write $A=S(v)$. Along the flow we have 
\[ \begin{aligned}
	\frac{d}{dt}\mathcal{F}_A(u_t) &= \int_P u_t^{ij} S(u_t)_{ij}\, d\mu -
	\mathcal{L}_A(S(u_t))\\
	&= \int_P (u_t^{ij})_{ij} S(u_t)\, d\mu + \int_P A S(u_t)\, d\mu \\
	&= \int_P (A-S(u_t))S(u_t)\, d\mu \leqslant C,
\end{aligned} \]
because the Calabi flow decreases the $L^2$-norm of $S(u_t)$. 
This implies that
\begin{equation}\label{eq:F_ineq}
  \mathcal{F}_A(u_t) \leqslant C(1+t)
\end{equation}
for some constant $C$. 

Now we use that $A=-(v^{ij})_{ij}$. We can write
\[\begin{aligned}
	\mathcal{F}_A(u) &= -\int_P \log\det(v^{ik}u_{kj})\, d\mu +
	\mathcal{L}_A(u) + C_1\\
	&= -\int_P \log\det(v^{ik}u_{kj})\, d\mu + \int_P v^{ij}u_{ij}\,
	d\mu +C_1,
\end{aligned} \]
for some constant $C_1$.
For a positive definite symmetric matrix $M$ we have $\log\det(M)
\leqslant \frac{1}{2} \mathrm{Tr}(M)$, applying the inequality $\log x < x/2$ 
to each eigenvalue. This implies that 
\[ \mathcal{F}_A(u) \geqslant \frac{1}{2}\mathcal{L}_A(u) + C_1. \]
Together with (\ref{eq:F_ineq}) this implies the result.
\end{proof}

\begin{lem}
  Fix some $v\in\mathcal{S}$, and write $A=S(v)$. For any $u\in\mathcal{S}$
  we have
  \[ -\int_P \log\det( u_{ij})\, d\mu \geqslant -C_1\log\mathcal{L}_A(u)
  -C_2,\]
  for some constants $C_1,C_2>0$.
\end{lem}
\begin{proof}
  Observe that
  \[ -\int_P \log\det(u_{ij}) = -\int_P\log\det (v^{ik}u_{kj})\,
  d\mu + C \]
The convexity of $-\log$ implies
\[ -\log\det (v^{ik}u_{kj}) \geqslant -C_1 \log\mathrm{Tr}(v^{ik}u_{kj}) - C_2 =
-C_1 \log v^{ij}u_{ij} - C_2.
\]
Therefore using the convexity of $-\log$ again,
\[ \begin{aligned}
  -\int_P \log\det(u_{ij})\,d\mu
  &\geqslant -C_1 \int_P \log v^{ij}u_{ij}\, d\mu -
	C_2 \\
	&\geqslant -C'_1 \log \int_P v^{ij}u_{ij}\, d\mu -C'_2 \\
	&= -C'_1 \log\mathcal{L}_A(u) -C'_2. 
\end{aligned} \]
\end{proof}

We are now ready to prove our theorem.
\begin{proof}[Proof of Theorem~\ref{thm:calflow}]
  Let us write $B = \hat{S} - \Phi$ as usual.
  Recall that $B$ satisfies $\mathcal{L}_{B}(f)\geqslant 0$ for
  all convex functions $f$, so that 
  \[ \mathcal{F}_B(u_t) \geqslant -\int_P \log\det(u_{t,ij})\,d\mu.\]
  The previous two Lemmas combined imply that 
  \[ \mathcal{F}_B(u_t) \geqslant -C_1\log(1+t) - C_2. \]
  At the same time we have
  \begin{equation}\label{eq:ddtF}
    \begin{aligned}
	\frac{d}{dt} \mathcal{F}_B(u_t) &= -\int_P (B-S(u_t))^2\, d\mu + \int_P
	B^2\, d\mu - \int_P BS(u_t)\,d\mu \\
	&= -\int_P (B-S(u_t))^2\, d\mu + \int_P u_t^{ij} B_{ij}\, d\mu -
	\mathcal{L}_B(B)\\
	&\leqslant -\int_P (B-S(u_t))^2\,d\mu
    \end{aligned}
  \end{equation}
  since $B$ is concave and $\mathcal{L}_B(B)=0$. 
  Together these inequalities imply that along some subsequence $u_k$ we
  have
  \[ \Vert S(u_k)-B\Vert_{L^2} \to 0. \]
  Since $\Vert S(u_t)\Vert_{L^2}$ is monotonically decreasing under the flow,
  this implies that 
  \[ \Vert S(u_t)\Vert_{L^2}\to \Vert B\Vert_{L^2}. \]
  In order to show that $S(u_t)\to B$ in $L^2$ not just along a subsequence,
  note that for $u\in\mathcal{S}$ we have
  \[ \mathcal{L}_{S(u)} (f) = \int_P u^{ij} f_{ij}\, d\mu \geqslant 0 \]
  for all continuous convex $f$, so that $S(u)$ is in the set $E$
  defined in Proposition~\ref{prop:L2lower}. Since $E$ is convex, we have that
  \[\frac{1}{2}(S(u_t) + B) \in E,\]
  so since $B$ minimises the $L^2$-norm in $E$, we have (suppressing the $L^2$
  from the notation) 
  \[\Vert S(u_t)+B\Vert  \geqslant 2\Vert B\Vert.\]
  It follows that 
  \[ \begin{aligned} \Vert S(u_t)-B\Vert^2 &= 2(\Vert S(u_t)\Vert^2 +
	  \Vert B\Vert^2) - \Vert S(u_t)+B\Vert^2 \\
	  &\leqslant 2(\Vert S(u_t)\Vert^2 +
	  \Vert B\Vert^2) - 4\Vert B\Vert^2 \\
	  &= 2(\Vert
	  S(u_t)\Vert^2-\Vert B\Vert^2)\to 0. \\
   \end{aligned} \]
   This proves the first part of the theorem.

   For the second part simply note that for $u\in\mathcal{S}$ we have $S(u)\in
   E$ as above, so that Proposition~\ref{prop:L2lower} implies that 
   \[ \Vert S(u)\Vert_{L^2} \geqslant \Vert \hat{S}-\Phi\Vert_{L^2}.\]
   Hence by the previous argument $\Vert\hat{S}-\Phi\Vert$ is in fact the infimum
   of $\Vert S(u)\Vert$ over $u\in\mathcal{S}$.
\end{proof}

We remark that Donaldson's theorem in~\cite{Don05} implies that we can
take the infimum over all metrics in the K\"ahler class, not just the
torus invariant ones. In other words we obtain
\[ \inf_{\omega\in c_1(L)} \Vert S(\omega)-\hat{S}\Vert_{L^2} = \Vert
\Phi\Vert_{L^2},\]
where $L$ is the polarisation that we chose. This shows that existence
of the Calabi flow for all time implies Conjecture~\ref{conj:calinf}
for toric varieties.

Let us also observe that from Equation (\ref{eq:ddtF}) it follows
that if the flow exists for all time, then along a subsequence $u_k$ we
have
\[\int_P u^{ij}_k B_{ij}\,d\mu \to 0.\]
In particular at almost every point where $B$ is strictly concave, we must have
$u^{ij}_k\to 0$. On the other hand
suppose that $B$ is piecewise linear and one of its creases is
parallel to the plane $x_1=0$. This means that $B_{11}$ is a delta
function along that crease, and $B_{ij}$ vanishes for other $i,j$. It
follows that along the subsequence $u_k$ we have $u^{11}_k\to 0$ on 
this crease. In view of the formula (\ref{eq:metricuij}) for the metric
given by $u$ this means that along the creases of $B$ an $S^1$ fibration
collapses. This suggests that the Calabi flow breaks up the toric
variety into the pieces given by the Harder-Narasimhan filtration.  
    
We hope that the calculations here will be useful for showing
that the Calabi flow exists for all time. In particular note that
it follows from Proposition 5.2.2. in~\cite{Don02} that for
$v\in\mathcal{S}$ there is a constant $\lambda > 0$ such that for all
 normalised convex functions $f\in\mathcal{C}_1$ on the polytope we have
\[ \mathcal{L}_{S(v)}(f) \geqslant \lambda\int_{\partial P} f\, d\sigma.\]
Together with
Lemma~\ref{lem:LAdecay} this implies that
for a solution $u_t$ of the Calabi
flow we have a bound of the form
\begin{equation}\label{eq:intdP}
	\int_{\partial P}\tilde{u_t}\,d\sigma \leqslant C(1+t), 
\end{equation}
where $\tilde{u_t}$ is the normalisation of $u_t$. In addition
one would need much better control of
the scalar curvature along the flow in order
to use Donaldson's results (\cite{Don06} and unpublished work in
progress) to control the metrics under the flow at
least in the two dimensional case.

\bibliographystyle{hplain} 
\bibliography{../mybib}
\end{document}